\newcommand{\sqnorm}[1]{\left\|#1\right\|^2}
\newcommand{\vectornorm}[1]{\left\|#1\right\|}
\newcommand{\norm}[1]{{}\left\| #1 \right\|}
\newcommand{\bigO}{\mathcal{O}}
\newcommand{\g}{{\bf g}}
\newcommand{\h}{{\bf h}}
\newcommand{\s}{{\bf s}}
\renewcommand{\u}{{\bf u}}
\newcommand{\vdelta}{{\bf \delta}}
\newcommand{\w}{{\bf w}}
\newcommand{\x}{{\bf x}}
\newcommand{\y}{{\bf y}}
\newcommand{\z}{{\bf z}}
\def\Am{{\bf A}}
\def\Bm{{\bf B}}
\def\Hm{{\bf H}}
\def\Im{{\bf I}}
\def\Tm{{\bf T}}
\def\Xm{{\bf X}}
\newcommand{\A}{{\mathcal A}}
\newcommand{\E}{{\mathbb E}}
\newcommand{\F}{{\mathcal F}}
\newcommand{\K}{{\mathcal K}}
\newcommand{\M}{{\mathcal M}}
\newcommand{\R}{{\mathbb{R}}}
\renewcommand{\S}{{\mathcal S}}
\newcommand{\T}{{\mathbb T}}
\newcommand{\U}{{\mathcal U}}
\newcommand{\X}{{\mathcal X}}
\newcommand{\Y}{{\mathcal Y}}
\newcommand{\Z}{{\mathbf Z}}
\newtheorem{theorem}{Theorem}
\newtheorem{lemma}[theorem]{Lemma}
\newtheorem{assumption}[theorem]{Assumption}
\newtheorem{condition}{Condition}
\newtheorem{remark}{Remark}
\DeclarePairedDelimiter\ceil{\lceil}{\rceil}
\newcommand*{\nfrac}[2]{#1\left/\vphantom{#1}#2\right.}
\newcommand{\argmin}{argmin}
\newcommand{\methodname}{STM}
\title{A Sub-sampled Tensor Method for Non-convex Optimization \\ \small{Initial title: 
A Stochastic Tensor Method for Non-convex Optimization}}
\date{\today \\ First version: November 29, 2019}
\author{Aurelien Lucchi \quad Jonas Kohler \\
Department of Computer Science, ETH Z\"urich}
\begin{document}
\maketitle

\begin{abstract}
\noindent We present a stochastic optimization method that uses a fourth-order regularized model to find local minima of smooth and potentially non-convex objective functions with a finite-sum structure. This algorithm uses sub-sampled derivatives instead of exact quantities. The proposed approach is shown to find an $(\epsilon_1,\epsilon_2,\epsilon_3)$-third-order critical point in at most $\bigO\left(\max\left(\epsilon_1^{-4/3}, \epsilon_2^{-2}, \epsilon_3^{-4}\right)\right)$ iterations, thereby matching the rate of deterministic approaches. In order to prove this result, we derive a novel tensor concentration inequality for sums of tensors of any order that makes explicit use of the finite-sum structure of the objective function.
\end{abstract}


\section{Introduction}

We consider the problem of optimizing an objective function of the form
\begin{equation}
\x^* = \arg \min_{\x \in \R^d}  \left [ f(\x) := \frac1n \sum_{i=1}^n f_i(\x) \right ],
\label{eq:f_x}
\end{equation}
where $f(\x) \in C^3(\R^d, \R)$ is a not necessarily convex loss function defined over $n$ datapoints.

Our setting is one where access to the exact function gradient $\nabla f$ is computationally expensive (e.g. large-scale setting where $n$ is large) and one therefore wants to access only stochastic evaluations $\nabla f_i$, potentially over a mini-batch. In such settings, stochastic gradient descent (SGD) has long been the method of choice in the field of machine learning. Despite the uncontested empirical success of SGD to solve difficult optimization problems -- including training deep neural networks -- the convergence speed of SGD is known to slow down close to saddle points or in ill-conditioned landscapes~\citep{nesterov2004introductory, dauphin2014identifying}. While gradient descent requires $\bigO(\epsilon^{-2})$ oracle evaluations\footnote{In "oracle evaluations" we include the number of function and gradient evaluations as well as evaluations of higher-order derivatives.} to reach an $\epsilon$-approximate \emph{first-order} critical point, the complexity worsens to $\bigO(\epsilon^{-4})$ for SGD. In order to recover the rate of the deterministic gradient descent, a common technique is to rely on adaptive sampling techniques~\citep{friedlander2012hybrid} or variance reduction~\citep{johnson2013}.
Another way to speed up the convergence of gradient-based methods is to rely on higher-order derivatives. For instance regularized Newton methods and trust-region methods exploit curvature information, allowing them to enjoy faster convergence to a \emph{second-order} critical point.
In this work, we focus our attention on regularized high-order methods to optimize Eq.~\eqref{eq:f_x}, which construct and optimize a local Taylor model of the objective in each iteration with an additional step length penalty term that depends on how well the model approximates the real objective. This paradigm goes back to Trust-Region and Cubic Regularization methods, which also make use of regularized models to compute their update step~\citep{conn2000trust,nesterov2006cubic, cartis2011adaptive}. For the class of second-order Lipschitz smooth functions,~\cite{nesterov2006cubic} showed that the Cubic Regularization framework finds an $(\epsilon_1, \epsilon_2)$-approximate second-order critical point in at most $\max \left(\bigO(\epsilon_1^{-3/2}), \bigO(\epsilon_2^{-3}) \right)$ iterations, thus achieving the best possible rate in this setting~\citep{carmon2017lower}. Recently, stochastic extensions of these methods have appeared in the literature such as~\cite{cartis2015global, kohler2017sub, tripuraneni2018stochastic, xu2017newton}. These will be discussed in further details in Section~\ref{sec:related_work}.

Since the use of second derivatives can provide significant theoretical speed-ups, a natural question is whether higher-order derivatives can result in further improvements. This questions was answered affirmatively in~\cite{birgin2017worst} who showed that using derivatives up to order $p \geq 1$ allows convergence to an $\epsilon_1$-approximate first-order critical point in at most $\bigO(\epsilon_1^{-(p+1)/p})$ evaluations. This result was extended to $\epsilon_2$-second-order stationarity in~\cite{cartis2020concise}, which proves an $\bigO(\epsilon_2^{-(p+1)/(p-1)})$ rate. Yet, these results assume a deterministic setting where access to exact evaluations of the function derivatives is needed and -- to the best of our knowledge -- the question of using high-order ($p \geq 3$) derivatives in a stochastic setting has received little consideration in the literature so far. We focus our attention on the case of computing derivative information of up to order $p=3$. It has recently been shown in~\cite{nesterov2015implementable} that, while optimizing degree four polynomials is NP-hard in general~\cite{hillar2013most}, the specific models that arise from a third-order Taylor expansion with a quartic regularizer can still be optimized efficiently.

From an application point of view, second-order methods (case $p=2$) are potentially of interest for problems that are ill-conditioned. However, the main drawback of these methods if their high computational cost which has so far impeded their adoption in the field of machine learning and others. Some recent work has shown that sub-sampling~\citep{erdogdu2015convergence} or other dimensionality reduction techniques~\citep{pilanci2017newton} provide some solutions to reduce the computational cost. We refer the reader to Section~\ref{sec:related_work} for further details. In general, higher-order methods ($p > 2$) become especially of interest when highly-accurate solutions or high-order optimality conditions are needed. We refer the reader to~\citep{floudas2005global, gould2017higher}, among others, for an overview of potential applications.

The main contribution of this work is to demonstrate that a sub-sampled third-order regularized method under appropriate sampling conditions can find an $(\epsilon_1, \epsilon_2, \epsilon_3)$ third-order stationary point in at most $\max \left(\bigO(\epsilon_1^{-4/3}), \bigO(\epsilon_2^{-2}), \bigO(\epsilon_3^{-4}) \right)$ iterations (Thm.~\ref{th:worst_case_complexity}). This results in an algorithm that can adaptively change the batch-size in order to match the results obtained by deterministic methods. In order to prove this result, we develop a novel tensor concentration inequality (Thm.~\ref{th:tensor_hoefdding_no_replacement}) for sums of tensors of any order and make explicit use of the finite-sum structure given in Eq.~\eqref{eq:f_x}. Together with existing matrix and vector concentration bounds~\citep{tropp2015introduction}, this allows us to define the sufficient amount of samples needed for convergence. We thereby provide theoretically motivated sampling schemes for the derivatives of the objective for both sampling with and without replacement.


\section{Related work}
\label{sec:related_work}

\paragraph{Sampling techniques for first-order methods.}
In large-scale learning ($n \gg d$) most of the computational cost of traditional deterministic optimization methods is spent on computing the exact gradient information. A common technique to address this issue is to use sub-sampling to compute an unbiased estimate of the gradient. The simplest instance is SGD whose convergence does not depend on the number of datapoints $n$. However, the variance in the stochastic gradient estimates slows its convergence down. The work of~\cite{friedlander2012hybrid} explored a sub-sampling technique in the case of convex functions, showing that it is possible to maintain the same convergence rate as full-gradient descent by carefully increasing the sample size over time. Another way to recover a linear rate of convergence for strongly-convex functions is to use variance reduction~\citep{johnson2013, defazio2014saga, roux2012stochastic, hofmann2015variance, daneshmand2016small}. The convergence of SGD and its variance-reduced counterpart has also been extended to non-convex functions~\cite{ghadimi2013stochastic, reddi2016stochastic} but the guarantees these methods provide are only in terms of first-order stationarity. However, the work of~\cite{ge2015escaping, sun2015nonconvex, daneshmand2018escaping} among others showed that SGD can achieve stronger guarantees in the case of strict-saddle functions. Yet, the convergence rate has a polynomial dependency to the dimension $d$ and the smallest eigenvalue of the Hessian which can make this method fairly impractical.

\paragraph{Second-order methods.}
For second-order methods that are not regularized or that make use of positive definite Hessian approximations (e.g. Gauss-Newton), the problem of avoiding saddle points is even worse as they might be attracted by saddle points or even local maximima~\citep{dauphin2014identifying}. Another predominant issue is the computation (and storage) of the Hessian matrix, which can be partially addressed by Quasi-Newton methods such as (L-)BFGS. An increasingly popular alternative is to use sub-sampling techniques to approximate the Hessian matrix, such as in~\cite{byrd2011use} and \cite{erdogdu2015convergence}. The latter method uses a low-rank approximation of the Hessian to reduce the complexity per iteration. However, this yields a composite convergence rate: quadratic at first but only linear near the minimizer.

Finally, an alternative to sub-sampling is sketching~\citep{pilanci2017newton} where the Hessian matrix is approximated via a random projection. We refer the reader to~\citep{berahas2020investigation} for a comparison of sub-sampling and sketching techniques. In brief, sub-sampling is seen as a simple method to implement and inexpensive, while sketching has some additional computational cost that might only be beneficial in some settings, for instance when the individual components of the loss (in a finite-sum setting) are highly dissimilar, which would require large batches in a sub-sampling setting.

\paragraph{Cubic regularization and trust region methods.}
Trust region methods are among the most effective algorithmic frameworks to avoid pitfalls such as local saddle points in non-convex optimization. Classical versions iteratively construct a local quadratic model and minimize it within a certain radius wherein the model is trusted to be sufficiently similar to the actual objective function. This is equivalent to minimizing the model function with a suitable \textit{quadratic} penalty term on the stepsize. Thus, a natural extension is the cubic regularization method introduced by~\cite{nesterov2006cubic} that uses a \textit{cubic} over-estimator of the objective function as a regularization technique for the computation of a step to minimize the objective function. The drawback of their method is that it requires computing the exact minimizer of the cubic model, thus requiring the exact gradient and Hessian matrix. However, finding a global minimizer of the cubic model $m_k(\s)$ may not be essential in practice and doing so might be prohibitively expensive from a computational point of view. ~\cite{cartis2011adaptive} introduced a method named ARC which relaxed this requirement by letting $\s_k = \argmin_\s m_k(\s)$ be an approximation to the minimizer. The model defined by the adaptive cubic regularization method introduced two further changes. First, instead of computing the exact Hessian $\Hm_k$ it allows for a symmetric approximation $\Bm_k$. Second, it introduces a dynamic step-length penalty parameter $\sigma_k$ instead of using the global Lipschitz constant. Our approach relies on the same adaptive framework.

There have been efforts to further reduce the computational complexity of optimizing the model. For example, \cite{agarwal2016finding} refined the approach of~\cite{nesterov2006cubic} to return an approximate local minimum in time which is linear in the input. Similar improvements have been made by ~\cite{carmon2016gradient} and ~\cite{hazan2016linear}. These methods provide alternatives to minimize the cubic model and can thus be seen as complementary to our approach. Finally,~\cite{blanchet2016convergence} proposed a stochastic trust region method but their analysis does not specify any accuracy level required for the estimation of the stochastic Hessian. ~\cite{cartis2015global} also analyzed a probabilistic cubic regularization variant that allows for approximate second-order models. ~\cite{kohler2017sub} provided an explicit derivation of sampling conditions to preserve the worst-case complexity of ARC. Other works also derived similar stochastic extensions to cubic regularization, including~\cite{xu2017newton} and~\cite{tripuraneni2018stochastic}. The worst-case rate derived in the latter includes the complexity of a specific model solver introduced in~\cite{carmon2016gradient}.

\paragraph{High-order models.}
A hybrid algorithm suggested in~\cite{anandkumar2016efficient} adds occasional third-order steps to a cubic regularization method, thereby obtaining provable convergence to some type of third-order local minima. A recent extension with an adaptive parameter tuning strategy was proposed by~\cite{zhu2020adaptive}. Both approaches rely on deterministic derivatives, while we address the problem of stochastic optimization.
High-order derivatives can also directly be applied within the regularized Newton framework, as done e.g. by ~\cite{birgin2017worst} that extended cubic regularization to a $p$-th high-order model (Taylor approximation of order $p$) with a $(p+1)$-th order regularization, proving iteration complexities of order $\bigO(\epsilon^{-\nfrac{(p+1)}{p}})$ for first-order stationarity. 
These convergence guarantees are extended to the case of inexact functions and derivatives by~\cite{bellavia2018adaptive} who present a dynamic strategy to deal with inaccurate functions and derivatives.
However, their work focuses on a more abstract setting and we do not see any direct way to implement such a strategy in the finite-sum setting. Furthermore, they only discuss a possible implementation for the case where $p=2$ while we focus on the case $p=3$. In order to address the latter case, we derive a new tensor concentration bound to give a condition on the number of samples required to compute the derivatives (see Theorems~\ref{th:tensor_hoefdding_no_replacement} and~\ref{th:tensor_hoefdding}). We believe this new concentration result itself is of independent interest.

The convergence guarantees of $p$-th order models are extended to second-order stationarity by~\cite{cartis2020concise}. Notably, all these approaches require optimizing a $(p+1)$-th order polynomial, which is known to be a difficult problem. Recently,~\cite{nesterov2015implementable, grapiglia2019inexact} introduced an implementable method for the case $p=3$ in the deterministic case and for convex functions.
In this work, we too focus on the case $p=3$ for which we will provide an implementable version of our algorithm, along with an experimental validation on several datasets. We however note that the analysis could probably be extended to the case of arbitrary $p$.

Finally, another line of works~\citep{allen2018natasha, xu2018first} considers methods that do not use high-order derivatives explicitly within a regularized Newton framework but rather rely on other routines -- such as Oja's algorithm -- to explicitly find negative curvature directions and couple those with SGD steps.


\section{Formulation}

\subsection{Notation \& Assumptions}

First, we lay out some standard assumptions regarding the function $f$ as well as the required approximation quality of the high-order derivatives.

\begin{assumption}[Continuity]
\label{a:continuity}
The functions $f_i \in C^3(\R^d, \R)$, $\nabla f_i, \nabla^2 f_i$, $\nabla^3 f_i$ are Lipschitz continuous for all $i$, with Lipschitz constants $L_f, L_g, L_b$ and $L_t$ respectively.
\end{assumption}

In the following, we will denote the $p$-th directional derivative of the function $f$ at $\x$ along the directions $\h_j \in \R^d, j = 1 \dots p$ as
\begin{equation}
\nabla^p f(\x)[\h_1, \dots \h_p].
\end{equation}
For instance, $\nabla f(\x)[\h] = \nabla f(\x)^\top \h$ and $\nabla^2 f(\x)[\h]^2 = \h^\top \nabla^2 f(\x) \h$.

Assumption~\ref{a:continuity} implies that for each $p = 0 \dots 3$,
\begin{equation}
\| \nabla^p f_i (\x) - \nabla^p f_i (\y) \|_{[p]} \leq L_p \| \x - \y \|
\label{eq:Lipschitz_continuous}
\end{equation}
for all $\x, \y \in \R^d$ and where $L_0 = L_f, L_1 = L_g, L_2 = L_b, L_3 = L_t$.

As in~\cite{cartis2020concise}, $\| \cdot \|_{[p]}$ is the tensor norm recursively induced by the Euclidean norm $\| \cdot \|$ on the space of $p$-th order tensors.

\subsection{Sub-sampled surrogate model}

We construct a surrogate model to optimize $f$ based on a truncated Taylor approximation as well as a power prox function weighted by a sequence $\{ \sigma_k \}_k$ that is controlled adaptively according to the fit of the model to the function $f$. Since the full Taylor expansion of $f$ requires computing high-order derivatives that are expensive, we instead use an inexact model defined as
\begin{align}
m_k(\s) &= \phi_k(\s) + \frac{\sigma_k}{4} \vectornorm{\s}^4_2, \nonumber \\
\phi_k(\s) &= f(\x_k) + \g_k^\top \s + \frac12 \s^\top \Bm_k \s + \frac{1}{6} \Tm_k [\s]^3
\label{eq:model}
\end{align}
where $\g_k, \Bm_k$ and $\Tm_k$ approximate the derivatives $\nabla f(\x_k), \nabla^2 f(\x_k)$ and $\nabla^3 f(\x_k)$ through sampling as follows. Three sample sets $\S^g, \S^b$ and $\S^t$ are drawn and the derivatives are then estimated as
\begin{align}
\g_k &= \frac{1}{|\S^g|} \sum_{i \in \S^g} \nabla f_i(\x_k),
\Bm_k = \frac{1}{|\S^b|} \sum_{i \in \S^b} \nabla^2 f_i(\x_k), \nonumber \\
\Tm_k &= \frac{1}{|\S^t|} \sum_{i \in \S^t} \nabla^3 f_i(\x_k).
\label{eq:sampled_derivatives}
\end{align}

\paragraph{Model derivatives}

The first derivative of the model w.r.t. $\s$ is
\begin{equation}
\nabla_\s m_k(\s) = \g_k + \Bm_k \s + \frac{1}{2} \Tm_k [\s]^2 + \sigma_k \s \| \s \|^2.
\end{equation}

For the second-order derivative $\nabla_\s^2 m_k(\s)$, we get:
\begin{equation}
\nabla^2_\s m_k(\s) = \Bm_k + \Tm_k [\s] + \frac{\sigma_k}{4} \nabla^2_\s \| \s \|^4,
\end{equation}
where
\begin{equation}
\frac14 \nabla^2_\s \| \s \|^4 = \nabla_\s \; \s \| \s \|^2 = \| \s \|^2 \Im + 2 \s \s^\top \succcurlyeq \| \s \|^2.
\label{eq:hessian_s}
\end{equation}


\paragraph{Sampling conditions}

We will make use of the following condition in order to reach an $\epsilon$-critical point:


\begin{condition}
For a given $\epsilon$ accuracy, one can choose the size of the sample sets $\S^g, \S^b, \S^t$ for sufficiently small $\kappa_g, \kappa_b, \kappa_t > 0$ such that:
\begin{align}
\label{eq:sampling_g}
\| \g_k - \nabla f(\x_k) \| &\leq \kappa_g \epsilon \\ 
\label{eq:sampling_b}
\| (\Bm_k - \nabla^2 f(\x_k)) \s \| &\leq \kappa_b \epsilon^{2/3} \| \s \|, \;\; \forall \s \in \R^d \\
\label{eq:sampling_t}
\| \Tm_k[\s]^2 - \nabla^3 f(\x_k) [\s]^2 \| &\leq \kappa_t \epsilon^{1/3} \| \s \|^2, \;\; \forall \s \in \R^d.
\end{align}
\label{cond:sampling}
\end{condition}

In Lemma~\ref{lemma:sampling_conditions}, we prove that we can choose the size of each sample set $\S^g, \S^b$ and $\S^t$ to satisfy the conditions above, without requiring knowledge of the length of the step $\| \s_k \|$. We will present a  convergence analysis of \methodname, proving that the convergence properties of the deterministic methods~\citep{birgin2017worst, nesterov2015implementable} can be retained by a sub-sampled version at the price of slightly worse constants.

\subsection{Algorithm}
\label{sec:algo}

 \begin{algorithm*}[tb]
   \caption{Sub-sampled Tensor Method (STM)}
   \label{alg:stm}
\begin{algorithmic}[1]
   \STATE {\bfseries Input:} \\ 
   $\quad$ Starting point $\x_0 \in \R^d$ (e.g~$\x_0 = {\bf 0}$) \\
   $\quad 0 < \gamma_1 < 1 < \gamma_2 < \gamma_3, 1>\eta_2>\eta_1>0$, and $\sigma_0>0, \sigma_{min}>0$
   \FOR{$k=0,1,\dots,\text{until convergence}$}
   \STATE Sample gradient $\g_k$, Hessian $\Bm_k$ and $\Tm_k$ such that Eq.~\eqref{eq:sampling_g}, Eq.~\eqref{eq:sampling_b} \& Eq.~\eqref{eq:sampling_t} hold.
   \STATE Obtain $\s_k$ by solving $m_k(\s_k)$ (Eq.~\eqref{eq:model}) such that Condition~\ref{cond:approximate_min} holds.
   \STATE Compute $f(\x_k + \s_k)$ and 
	\begin{equation}
	\rho_k=\dfrac{f(\x_k)-f(\x_k + \s_k)}{f(\x_k) - \phi_k(\s_k)}.
	\end{equation}
	\STATE Set
	\begin{equation}
	\x_{k+1} = \begin{cases}
	\x_k + \s_k & \text{ if } \rho_k \geq \eta_1\\
	\x_k & \text{ otherwise.}
	\end{cases}
	\end{equation}
	\STATE Set
	\begin{equation} \label{eq:sigma_update}
	\sigma_{k+1}= \begin{cases}
	[\max\{\sigma_{min},\gamma_1 \sigma_k \}, \sigma_k] & \text{ if } \rho_k>\eta_2 \text{ (very successful iteration)}\\
	[\sigma_k, \gamma_2 \sigma_k] & \text{ if } \eta_2\geq\rho_k\geq \eta_1 \text{ (successful iteration)}\\
	[\gamma_2 \sigma_k, \gamma_3 \sigma_k] & \text{ otherwise}\text{ (unsuccessful iteration)}.
	\end{cases}
	\end{equation}
   \ENDFOR
\end{algorithmic}
\end{algorithm*}

The optimization algorithm we consider is detailed in Algorithm~\ref{alg:stm}. A deterministic version of this algorithm is presented in~\citep{cartis2022evaluation} along with a complexity analysis.
The major difference with Algorithm~\ref{alg:stm} is that, at iteration step $k$, we sample three sets of datapoints from which we compute stochastic estimates of the derivatives of $f$ so as to satisfy Condition~\ref{cond:sampling}.
We then obtain the step $\s_k$ by solving the problem 
\begin{equation}
\s_k = \arg\min_{\s \in \R^d} m_k(\s),
\label{eq:subproblem}
\end{equation}
either exactly or approximately (details will follow shortly) and update the regularization parameter $\sigma_k$ depending on $\rho_k$, which measures how well the model approximates the real objective. This is accomplished by differentiating between different types of iterations. Successful iterations (for which $\rho_k \geq \eta_1$) indicate that the model is, at least locally, an adequate approximation of the objective such that the penalty parameter is decreased in order to allow for longer steps. We denote the index set of all successful iterations between 0 and $k$ by $\S_k = \{ 0 \leq j \leq k | \rho_j \geq \eta_1 \}$. We also denote by $\U_k$ its complement in $\{ 0, \dots k \}$  which corresponds to the index set of unsuccessful iterations.

\paragraph{Exact model minimization}
Solving Eq.~\eqref{eq:model} requires minimizing a nonconvex multivariate polynomial. As pointed out in~\cite{nesterov2015implementable, baes2009estimate}, this problem is computationally expensive to solve in general and further research is needed to establish whether one could design a practical minimization method. In~\cite{nesterov2015implementable}, the authors demonstrated that an appropriately regularized Taylor approximation of convex functions is a convex multivariate polynomial, which can be solved using the framework of relatively smooth functions developed in~\cite{lu2018relatively}. As long as the involved models are convex, this solver could be used in Algorithm \ref{alg:stm} but it is unclear how to generalize this method to the non-convex case. Fortunately, we will see next that exact model minimizers are not even needed to establish global convergence guarantees for our method.

\paragraph{Approximate model minimization}

Exact minimization of the model in Eq.~\eqref{eq:model} is often computationally expensive, especially given that it is required for every parameter update in Algorithm~\ref{alg:stm}. In the following, we explore an approach to approximately minimize the model while retaining the convergence guarantees of the exact minimization. Instead of requiring the exact optimality conditions to hold, we use weaker conditions that were also used in prior work~\citep{birgin2017worst, cartis2020concise}. First, we define three criticality measures based on first-, second-, and third-order information:
\begin{align}
    \chi_{f, 1}(\x_k) &:= \| \nabla f(\x_k) \|, \nonumber \\
    \chi_{f, 2}(\x_k) &:= \max \left(0, -\lambda_{min}(\nabla^2 f(\x_k)) \right), \nonumber \\
    \chi_{f, 3}(\x_k) &:= \max_{\y \in \Z_{k+1}} \left| \nabla^3 f(\x_k)[\y]^3  \right|,
\label{eq:def_chi_f}
\end{align}
where $\lambda_{min}(\nabla^2 f(\x))$ is the minimum eigenvalue of the Hessian matrix $\nabla^2 f(\x)$.
The set $\Z_{k+1}$ is used to capture third-order optimality and is typically defined as the kernel of $\nabla^2 f(\x_k)$~\cite{cartis2018second}, or alternatively using a $\zeta$-approximate notion of optimality defined in~\cite{cartis2022evaluation} as
\begin{equation}
\Z_{k+1} := \{ \y \mid \| \y \| = 1 \text{ and } | \nabla^2 f(\x_k)[\y]^2 | \leq \zeta \}.
\end{equation}
We refer the reader to~\cite{cartis2018second, cartis2022evaluation} for an in-depth discussion about the difficulties of finding exact high-order minimizers, which go beyond the scope of this work.

The same criticality measures are defined for the model $m_k(\s)$,
\begin{align}
    \chi_{m, 1}(\x_k, \s) := \| \nabla_\s m_k(\s) \|, \nonumber \\
    \chi_{m, 2}(\x_k, \s) := \max \left(0, -\lambda_{min}(\nabla_\s^2 m_k(\s)) \right), \nonumber \\
    \chi_{m, 3}(\x_k, \s) := \max_{\y \in \M_{k+1}} \left| \nabla^3 m_k(\s)[\y]^3  \right|,
\label{eq:def_chi_m}
\end{align}
where
\begin{equation}
\M_{k+1} := \{ \y \mid \| \y \| = 1 \text{ and } | \nabla^2 m_k(\s)[\y]^2 | \leq \zeta \}.
\end{equation}

Finally, we state the approximate optimality condition required to find the step $\s_k$.
\begin{condition}
For each iteration $k$, the step $\s_k$ is computed so as to approximately minimize the model $m_k(\s_k)$ in the sense that the following conditions hold:
\begin{align}
m_k(\s_k) &< m_k({\bf 0}) \nonumber \\
\chi_{m,i}(\x_k, \s_k) &\leq  \theta \| \s_k \|^{4-i}, \quad \theta > 0, \; \text{ for } i=1,\ldots, 3.
\label{eq:termination_criterion}
\end{align}
\label{cond:approximate_min}
\end{condition}

\paragraph{Termination criterion}
The algorithm should be stopped once we reach a third-order stationary point, i.e. a point $\x^*$ such that $\chi_{f,i}(\x^*) \leq \epsilon_i \quad \text{ for } i=1, \dots, 3$. In the setting where we only have access to sub-sampled derivatives, we replace this condition by the approximate quantity $\chi_{m,i} \leq \epsilon_i$, which changes to the final accuracy to
\begin{equation}
\chi_{f,i}(\x_k) = \chi_{f,i}(\x_k) \pm \chi_{m, i}(\x_k, \s) 
\leq \chi_{m, i}(\x_k, \s) + | \chi_{f,i}(\x_k) - \chi_{m, i}(\x_k, \s) | 
\leq (1 + \kappa_i) \epsilon_i,
\end{equation}
where the last inequality is due to the reverse triangle inequality.

\section{Fulfilling the sampling conditions}

In this section, we show how to ensure that the sampling conditions in Eqs.~\eqref{eq:sampling_g}-\eqref{eq:sampling_t} are satisfied. We discuss two cases: i) random sampling \emph{without} replacement, and ii) sampling with replacement.

\subsection{Sampling without replacement}

We first show that one can use random sampling without replacement and choose the size of the sample sets in order to satisfy Condition~\ref{cond:sampling} with high probability.
First, we need to develop a new concentration bound for tensors based on the spectral norm. Existing tensor concentration bounds are not applicable to our setting. Indeed, ~\cite{luo2019bernstein} relies on a different norm which can not be translated to the spectral norm required in our analysis while the bound derived in~\cite{vershynin2019concentration} relies on a specific form of the input tensor.

Formally, let $(\Omega, \F, P)$ be a probability space and let $\X$ be a real $(m, d)$ random tensor, i.e. a measurable map from $\Omega$ to $\T_{m,d}$ (the space of real tensors of order $m$ and dimension $d$).

Our goal is to derive a concentration bound for a sum of $n$ identically distributed tensors sampled \emph{without} replacement, i.e. we consider
$$
\X = \sum_{i=1}^n \Y_i,
$$
where each tensor $\Y_i$ is sampled from a population $\A$ of size $N>n$.

The concentration result derived in this section is based on an $\epsilon$-net argument for sums of tensors and is inspired by the proof introduced in~\cite{tomioka2014spectral}. Formally, we consider a tensor $\X \in \R^{d_1 \times \dots \times d_k}$ of order $k$ whose spectral norm is defined as
\begin{equation}
\| \X \| = \sup_{\substack{\u_1, \dots, \u_k\\ \| \u_i \|=1}} \X(\u_1, \dots \u_k).
\label{eq:spectralnorm}
\end{equation}

Note that for symmetric tensors, the spectral norm is simply equal to $\| \X \| = \sup_{\u \mid \| \u \|=1} \X(\u, \dots \u)$~\citep{anandkumar2016efficient}.

\paragraph{Auxiliary results}

We will need the following results in order to complete the proof of the main results presented in this section.

\begin{theorem}[Matrix Hoeffding-Serfling Inequality~\cite{wang2018stochastic}]
\label{th:Matrix_Hoeffding_Serfling_Inequality}
Let $\mathcal{A}:= \{\Am_1, \cdots, \Am_N\}$ be a collection of real-valued matrices in $\R^{d_1\times d_2}$ with bounded spectral norm, i.e.,  $\| \Am_i \| \leqslant\sigma$ for all $i = 1, \ldots, N$ and some $\sigma > 0$. 
Let $\Xm_1, \cdots, \Xm_n$ be $n<N$ samples from $\mathcal{A}$ under the sampling without replacement. Denote $\mu := \frac{1}{N} \sum_{i = 1}^{N} \Am_i$. Then,  for any $t > 0$, 
\begin{align*}
P \bigg(\bigg\|  \frac{1}{n}\sum_{i=1}^{n}  \Xm_i -   \mu\bigg\|  \geqslant   t \bigg)  \leq    (d_1 + d_2) \exp \bigg( -  \frac{n t^2}{8 \sigma^2 (1+1/n) (1- n/N)}\bigg).
\end{align*}
\end{theorem}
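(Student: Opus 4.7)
The plan is to combine the standard matrix Laplace transform machinery (Ahlswede--Winter / Tropp) with the classical Serfling reverse-martingale trick that gives the $(1-n/N)$ variance-reduction factor in the without-replacement setting. First, since the $\Am_i$ are rectangular ($d_1\times d_2$), I would pass to $(d_1+d_2)\times(d_1+d_2)$ Hermitian dilations $\widetilde\Am_i = \begin{pmatrix}0 & \Am_i \\ \Am_i^\top & 0\end{pmatrix}$, so that the spectral norm of any sum becomes the largest eigenvalue of the corresponding Hermitian sum. This accounts for the $(d_1+d_2)$ prefactor in the final bound: for a Hermitian matrix $\Hm$ of size $D$, one applies $P(\lambda_{\max}(\Hm)\ge t)\le D\cdot\inf_{\theta>0} e^{-\theta t}\,\mathbb{E}\operatorname{tr}\exp(\theta \Hm)$.

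Second, I would set $\Sm_n := \sum_{i=1}^n (\Xm_i - \mu)$ (working with the dilated versions) and analyze its moment-generating function in the without-replacement setting. The key device is a reverse martingale: fix the uniformly random permutation $\pi$ of $\{1,\dots,N\}$ that produces the samples, and let $Z_k := \tfrac{1}{N-k}\sum_{j=k+1}^{N}(\Am_{\pi(j)}-\mu)$ be the centered average of the \emph{remaining} items after $k$ draws. Then $\{Z_k\}$ is a reverse martingale whose one-step differences are bounded and whose conditional variance shrinks because the population of remaining items shrinks. Following Serfling's scalar argument, one shows that the matrix MGF $\mathbb{E}\operatorname{tr}\exp(\theta\Sm_n)$ is dominated (up to the factor $(1+1/n)(1-n/N)$) by the MGF of the corresponding with-replacement sum. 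For the with-replacement dominator, Lieb's concavity theorem and the bound $\|\Am_i-\mu\|\le 2\sigma$ give a standard Hoeffding-style estimate $\mathbb{E}\operatorname{tr}\exp(\theta\Sm_n)\le D\exp\!\left(\tfrac{\theta^2\,n\,(2\sigma)^2}{2}\,(1+1/n)(1-n/N)\right)$.

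Third, I would optimize the resulting exponential tail in $\theta>0$. Choosing $\theta = t/[8\sigma^2(1+1/n)(1-n/N)]$ (after normalizing by $n$) plugs into the Laplace bound and yields the stated inequality with $(d_1+d_2)$ in front and $\exp\!\left(-\tfrac{nt^2}{8\sigma^2(1+1/n)(1-n/N)}\right)$. The constant $8$ rather than $2$ comes from the $2\sigma$ range in Hoeffding's matrix lemma combined with the factor of $2$ lost in the Hermitian dilation step.

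The main obstacle is the matrix version of Serfling's reduction: in the scalar case one exploits the fact that $e^{\theta\cdot}$ applied to a reverse martingale difference can be bounded using the conditional Hoeffding lemma, but in the matrix case the non-commutativity of the $\Am_i$'s means one must lean on Lieb's concavity theorem inside the reverse-martingale recursion, conditioning on the sigma-field generated by the remaining sample at each step and using that conditioning on a without-replacement draw amounts to uniform sampling from a shrinking population. Getting the sharp $(1-n/N)$ factor (as opposed to the weaker estimate one obtains by naively coupling to with-replacement) is the delicate step; everything else is bookkeeping.
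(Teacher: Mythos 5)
The paper does not actually prove this statement: Theorem~\ref{th:Matrix_Hoeffding_Serfling_Inequality} is imported verbatim from~\cite{wang2018stochastic} and listed under ``Existing results,'' so there is no in-paper argument to compare yours against. Judged on its own, your roadmap is the standard and correct one for this result --- Hermitian dilation to reduce to a $(d_1+d_2)$-dimensional symmetric problem, the matrix Laplace-transform/master bound, and Serfling's martingale construction (your $Z_k$ is, up to sign, Serfling's $S_k/(N-k)$, which is exactly the right object and is what produces the $(1-n/N)$ factor); your optimization over $\theta$ also reproduces the stated exponent. The one genuinely hard step --- pushing Serfling's MGF recursion through the non-commutative setting via Lieb's concavity theorem and the iterated conditional-CGF bound --- is the step you explicitly flag as an ``obstacle'' rather than carry out, so as written the proposal is a correct skeleton with its load-bearing lemma left unproved. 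Two small corrections: the Hermitian dilation does not ``lose a factor of $2$'' (it preserves the spectral norm exactly, and since the dilation's spectrum is symmetric no extra two-sided union bound is needed); the factor-of-$4$ gap between the scalar exponent $t^2/(2\sigma^2)$ and the matrix exponent $t^2/(8\sigma^2)$ comes from the matrix Hoeffding mechanism itself (the $\Xm_k^2 \preccurlyeq \Am_k^2$ route, visible in the paper's own Theorem~\ref{thm:intro-hoeffding}), combined with the range $\|\Am_i - \mu\| \le 2\sigma$.
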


\begin{lemma}[~\cite{bardenet2015concentration}]
\label{lemma:Hoeffding_Serfling_lemma}
Let $\A := \{ y_1, \dots y_N \}$ be a finite population of $N$ points in $\mathbb{R}$ and $Y_1, \dots, Y_n$ be a random sample drawn without replacement from $\A$. Define $X_n = \frac{1}{n}\sum_{k=1}^n (Y_k - \mu)$ where $\mu= \frac {1}{N}\sum_{i=1}^N y_i$. Assume that $a \leq Y_k \leq b$, then or any $s > 0$, it holds that
\begin{eqnarray*}
\log \E \exp ( s X_n ) \leq \frac{(b-a)^2}{8} \frac{s^2}{n^2} (n+1) \biggl(1 - \frac{n}{N} \biggr) .
\end{eqnarray*}
\end{lemma}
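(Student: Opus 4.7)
The plan is to prove this via a reverse martingale argument. Realize the sample as $Y_k = y_{\pi(k)}$ for a uniformly random permutation $\pi$ of $\{1,\ldots,N\}$ and let $\bar{Y}_k := \frac{1}{k}\sum_{i=1}^k Y_i$. The first key observation is that $(\bar{Y}_k)_{k=1,\ldots,N}$ is a reverse martingale with respect to the filtration $\mathcal{G}_k := \sigma(\bar{Y}_k, Y_{k+1}, \ldots, Y_N)$: we have $\bar{Y}_N = \mu$ deterministically, and by exchangeability each of $Y_1, \ldots, Y_k$ is uniformly distributed over the $k$-element multiset $\{Y_1, \ldots, Y_k\}$ given $\mathcal{G}_k$, so $\E[\bar{Y}_{k-1} \mid \mathcal{G}_k] = \bar{Y}_k$.

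Second, compute the reverse-martingale differences: a short algebraic manipulation gives $\Delta_k := \bar{Y}_{k-1} - \bar{Y}_k = (\bar{Y}_k - Y_k)/(k-1)$. Given $\mathcal{G}_k$, $\bar{Y}_k$ is fixed and $Y_k$ is uniform over the multiset $\{Y_1, \ldots, Y_k\} \subseteq [a,b]$, so the conditional range of $\Delta_k$ has length at most $(b-a)/(k-1)$. Telescoping yields $X_n = \bar{Y}_n - \mu = \sum_{k=n+1}^N \Delta_k$.

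Third, apply Hoeffding's lemma conditionally and iterate via the tower property in the reverse-time direction to obtain an Azuma--Hoeffding style MGF bound
\begin{equation*}
\log \E \exp(s X_n) \leq \frac{s^2(b-a)^2}{8} \sum_{k=n+1}^N \frac{1}{(k-1)^2} = \frac{s^2(b-a)^2}{8} \sum_{j=n}^{N-1} \frac{1}{j^2}.
\end{equation*}

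The final step is the elementary inequality $\sum_{j=n}^{N-1} 1/j^2 \leq (n+1)(N-n)/(n^2 N)$, which is exactly $\frac{1}{n^2}(n+1)(1-n/N)$ and matches the claimed bound. I would prove it by downward induction on $n$, with base case $n = N-1$ where both sides equal $1/(N-1)^2$; the inductive step reduces, after clearing denominators, to a factored inequality of the form $(a-1)(a-N) \leq 0$ with $a := n+1 \in [1,N]$, which is immediate. The main obstacle in the argument is setting up and justifying the reverse-time Azuma--Hoeffding estimate correctly, in particular the identification of the conditional range of $\Delta_k$ given $\mathcal{G}_k$; once that is in place the remaining algebra is routine. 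A forward martingale based on the residual mean of unsampled items gives a slightly looser harmonic-type sum that does not match the clean Serfling factor, which is why the reverse-time construction is preferable here.
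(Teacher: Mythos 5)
Your proof is correct. Note that the paper does not prove this lemma at all --- it is imported verbatim from the cited reference \cite{bardenet2015concentration} --- and your reverse-martingale argument (the backward filtration $\mathcal{G}_k$, the difference identity $\bar{Y}_{k-1}-\bar{Y}_k=(\bar{Y}_k-Y_k)/(k-1)$ with conditional range $(b-a)/(k-1)$, the conditional Hoeffding/tower iteration giving $\frac{s^2(b-a)^2}{8}\sum_{j=n}^{N-1}j^{-2}$, and the downward induction for $\sum_{j=n}^{N-1}j^{-2}\leq (n+1)(N-n)/(n^2N)$) is essentially the proof given in that reference, and all steps check out. One peripheral remark is slightly off: the forward Serfling martingale $S_k/(N-k)$ also yields a clean factor, namely $(N-n+1)/(nN)$, which is in fact \emph{tighter} than $(n+1)(N-n)/(n^2N)$ when $n<N/2$ and looser only when $n>N/2$; this does not affect the validity of your argument, which establishes exactly the stated bound.
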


\paragraph{Proof idea} In the following, we first provide a concentration bound for a fixed set of unit-length vectors $\u_1,\ldots,\u_k$. (Lemma~\ref{lem:entry_concentration_no_replacement}). We then extend this result to arbitrary vectors on the unit sphere in order to obtain a concentration bound for the tensor $\X$ by using a covering argument similar to~\cite{tomioka2014spectral}.\\

\begin{restatable}{lemma}{concnoreplacement}
\label{lem:entry_concentration_no_replacement}
Let $\X$ be a sum of $n$ i.i.d. tensors $\Y_i \in \R^{d_1 \times \dots \times d_k}$ sampled without replacement from a finite population $\A$ of size $N$. Consider a fixed set of vectors $\u_1, \dots \u_k$ such that $\| \u_i \| = 1$ and assume that for each tensor $i$, $a \leq \Y_i(\u_1,\ldots,\u_k) \leq b$. Let $\sigma := (b-a)$, then we have
\begin{align*}
P\left(|\X(\u_1,\ldots,\u_k) - \E[\X(\u_1,\ldots,\u_k)] | \geq t \right) \leq 2 \exp \left( - \frac{2t^2 n^2}{ \sigma^2 (n+1) (1 - n/N)} \right).
\end{align*}
\end{restatable}
\begin{proof}

By Markov's inequality and Hoeffding's lemma, we have
\begin{align*}
P&\left(\X(\u_1,\ldots,\u_k) - \E[\X(\u_1,\ldots,\u_k)] \geq t\right) \\
&= P\left(e^{s(\X(\u_1,\ldots,\u_k)- \E[\X(\u_1,\ldots,\u_k)])}\geq e^{st} \right) \\
&\leq e^{-st} \E\left[e^{(s(\X(\u_1,\ldots,\u_k)- \E[\X(\u_1,\ldots,\u_k)]}) \right] \\
&= e^{-st} \E\left[e^{(s(\sum_i \Y_i(\u_1,\ldots,\u_k)- \E[\sum_i \Y_i(\u_1,\ldots,\u_k)]}) \right] \\
&\stackrel{(i)}{\leq} \exp\left(-st + \frac{\sigma^2}{8} \frac{s^2}{n^2} (n+1) \biggl(1 - \frac{n}{N} \biggr)\right),
\end{align*}
where $(i)$ follows from Lemma~\ref{lemma:Hoeffding_Serfling_lemma}.

After minimizing over $s$, we obtain
\begin{equation*}
P\left(\X(\u_1,\ldots,\u_k) - \E[\X(\u_1,\ldots,\u_k)] \geq t\right) \leq \exp \left( - \frac{2t^2 n^2}{ \sigma^2 (n+1) (1 - n/N)} \right).
\end{equation*}

By symmetry, one can easily show that $$P(\X(\u_1,\ldots,\u_k) \leq -t)\leq \exp \left( - \frac{2t^2 n^2}{ \sigma^2 (n+1) (1 - n/N)} \right).$$
We then complete the proof by taking the union of both cases.
\end{proof}

We are now ready to prove a new concentration inequality for sums of tensors.

\begin{restatable}{theorem}{tensorhoeffdingnoreplacement}[Tensor Hoeffding-Serfling Inequality]
\label{th:tensor_hoefdding_no_replacement}
Let $\X$ be a sum of $n$ tensors $\Y_i \in \R^{d_1 \times \dots \times d_k}$ sampled without replacement from a finite population $\A$ of size $N$. Let $\u_1, \dots \u_k$ be such that $\| \u_i \| = 1$ and assume that for each tensor $i$, $a \leq \Y_i(\u_1,\ldots,\u_k) \leq b$. Let $\sigma := (b-a)$, then we have
\begin{equation*}
P(\norm{\X - \E \X} \geq t) \leq k_0^{(\sum_{i=1}^{k} d_i)} \cdot 2 \exp \left( - \frac{t^2 n^2}{2 \sigma^2 (n+1) (1 - n/N)} \right),
\end{equation*}
where $k_0 = \left(\frac{2k}{\log(3/2)}\right)$.
\end{restatable}
\begin{proof}
The main idea is to create an $\epsilon$-net of countable size to cover the space $S^{d_1-1}, \ldots, S^{d_k-1}$. Formally, let $C_1,\ldots,C_k$ be $\epsilon$-covers of $S^{d_1-1}, \ldots, S^{d_k-1}$. Then since $S^{d_1-1} \times \cdots \times S^{d_k-1}$ is compact, there exists a maximizer $(\u_1^\ast,\ldots,\u_k^\ast)$ of \eqref{eq:spectralnorm}. Using the $\epsilon$-covers, we have
\begin{equation*}
\norm{\X} = \X(\bar{\u}_1+\vdelta_1,\ldots,\bar{\u}_k+\vdelta_k),
\end{equation*}
where $\bar{\u}_i \in C_i$ and $\|\vdelta_i\| \leq \epsilon$ for $i=1,\ldots,k$.
 
Now
\begin{equation*}
\norm{\X} \leq \X(\bar{\u}_1,\ldots,\bar{\u}_k) + \left(\epsilon k + \epsilon^2\binom{k}{2}+\cdots \epsilon^k\binom{k}{k}\right) \norm{\X}.
\end{equation*}

Take $\epsilon=\frac{\log(3/2)}{k}$ then the sum inside the parenthesis
 can be bounded as follows:
\begin{equation*}
\epsilon k + \epsilon^2\binom{k}{2} + \cdots \epsilon^k\binom{k}{k} \leq
\epsilon k + \frac{(\epsilon k)^2}{2!} + \cdots \frac{(\epsilon k)^k}{k!} \leq e^{\epsilon k} - 1 = \frac12.
\end{equation*}

Thus we have 
\begin{equation*}
\norm{\X} \leq 2 \max_{\bar{\u}_1\in C_1,\ldots,\bar{\u}_k\in C_k} \X(\bar{\u}_1,\ldots,\bar{\u}_k).
\end{equation*}

We conclude the proof with one last step that is a simple adaptation of the proof in~\cite{tomioka2014spectral}, combined with the result of Lemma~\ref{lem:entry_concentration_no_replacement}.

Since the $\epsilon$-covering number $|C_k|$ can be bounded by $\epsilon/2$-packing number, which can be bounded by $(2/\epsilon)^{d_k}$, using the union bound. Therefore,
\begin{align*}
P(\norm{\X - \E \X} \geq t) &\leq \sum_{\bar{\u}_1 \in C_1, \ldots, \bar{\u}_k \in C_k} P\left(\X(\bar{\u}_1,\ldots,\bar{\u}_k) - \E[\X(\bar{\u}_1,\ldots,\bar{\u}_k]) \geq \frac{t}{2}\right) \\
&\leq k_0^{\sum_{i=1}^{k} d_i} \cdot 2 \exp \left( - \frac{t^2 n^2}{2 \sigma^2 (n+1) (1 - n/N)} \right).
\end{align*}

\end{proof}

Based on Theorem~\ref{th:tensor_hoefdding_no_replacement} as well as standard concentration bounds for vectors and matrices (see e.g.~\cite{tropp2015introduction}), we prove that the required sampling conditions of Condition~\ref{cond:sampling} hold for the specific sample sizes given in the next lemma.

\begin{restatable}{lemma}{samplingconditions}
\label{lemma:sampling_conditions}
Consider the sub-sampled gradient, Hessian and third-order tensor defined in Eq.~\eqref{eq:sampled_derivatives}. Under Assumption~\ref{a:continuity}, the sampling conditions in Eqs.~\eqref{eq:sampling_g},~\eqref{eq:sampling_b} and~\eqref{eq:sampling_t} are satisfied with probability $1 - \delta,$ $\delta \in (0,1)$ for the following choice of the size of the sample sets $\S^g, \S^b$ and $\S^t$:
\begin{align*}
n_g = \tilde{\bigO} \left( \nfrac{\kappa_g^2 \epsilon^{2}}{L_f^2} + \nfrac{1}{N} \right)^{-1}, \\
n_b = \tilde{\bigO} \left( \nfrac{\kappa_b^2 \epsilon^{4/3}}{L_g^2} + \nfrac{1}{N} \right)^{-1}, \\
n_t = \tilde{\bigO} \left( \nfrac{\kappa_t^2 \epsilon^{2/3}}{L_b^2} + \nfrac{1}{N} \right)^{-1},
\end{align*}
where $\tilde{\bigO}$ hides poly-logarithmic factors and a polynomial dependency to $d$.
\end{restatable}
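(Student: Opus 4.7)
The plan is to handle the three sampling conditions of Eqs.~\eqref{eq:sampling_g}--\eqref{eq:sampling_t} independently, apply one concentration inequality per object (vector, matrix, tensor), take a union bound at level $\delta/3$ each, and invert the resulting sub-Gaussian tails to solve for the sample sizes.

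First, I would exploit Assumption~\ref{a:continuity} to get uniform bounds on the summands. Lipschitz continuity of $f_i,\nabla f_i,\nabla^2 f_i$ implies $\|\nabla f_i(\x_k)\|\leq L_f$, $\|\nabla^2 f_i(\x_k)\|_{op}\leq L_g$ and $\|\nabla^3 f_i(\x_k)\|_{[3]}\leq L_b$ uniformly in $i$ and $\x_k$. These play the role of the ``$\sigma=b-a$'' range constants required by each Hoeffding-Serfling-type inequality, with the factor $2$ lost to centering absorbed into the constants.

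For the gradient, I apply a vector Hoeffding-Serfling inequality (as in \cite{tropp2015introduction}) to $\g_k-\nabla f(\x_k)=\tfrac{1}{n_g}\sum_{i\in\S^g}(\nabla f_i(\x_k)-\nabla f(\x_k))$ with deviation $t=\kappa_g\epsilon$ and $\sigma=2L_f$. For the Hessian, the pointwise bound $\|(\Bm_k-\nabla^2 f(\x_k))\s\|\leq \kappa_b\epsilon^{2/3}\|\s\|$ holding for all $\s$ is equivalent to the operator-norm control $\|\Bm_k-\nabla^2 f(\x_k)\|_{op}\leq \kappa_b\epsilon^{2/3}$, so I apply a matrix Hoeffding-Serfling bound with $\sigma=2L_g$ and $t=\kappa_b\epsilon^{2/3}$. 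For the third-order tensor I rely on the elementary reduction
\begin{equation*}
\|(\Tm_k-\nabla^3 f(\x_k))[\s]^2\|=\max_{\|\u\|=1}\bigl|(\Tm_k-\nabla^3 f(\x_k))(\s,\s,\u)\bigr|\leq \|\Tm_k-\nabla^3 f(\x_k)\|_{[3]}\|\s\|^2,
\end{equation*}
which reduces the sampling condition to a bound on the tensor spectral norm of the deviation. This is exactly what the newly established Theorem~\ref{th:tensor_hoefdding_no_replacement} provides, applied to the mean-zero sum $\nabla^3 f_i(\x_k)-\nabla^3 f(\x_k)$ with $\sigma=2L_b$ and $t=\kappa_t\epsilon^{1/3}$.

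Each of the three tail bounds has the schematic form $\operatorname{poly}(d)\cdot\exp\!\bigl(-c\,n\,t^2/(\sigma^2(1-n/N))\bigr)$ (once absorbed into the sample-mean parametrization). Setting the right-hand side to $\delta/3$, taking logarithms, and solving for $n$ by routine algebra gives the harmonic expression $n\geq C\cdot (t^2/\sigma^2+1/N)^{-1}$, where $C$ collects the $\log(d/\delta)$ factors and is swept into $\tilde\bigO$. Substituting $(t,\sigma)\in\{(\kappa_g\epsilon,L_f),(\kappa_b\epsilon^{2/3},L_g),(\kappa_t\epsilon^{1/3},L_b)\}$ recovers the three claimed sample sizes, and a final union bound yields the joint guarantee at level $1-\delta$. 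The main obstacle is the tensor step: verifying that the spectral-norm bound of Theorem~\ref{th:tensor_hoefdding_no_replacement} indeed suffices to control $\|\cdot[\s]^2\|$ uniformly in $\s$ (handled by the display above), and ensuring that the $\epsilon$-net dimension factor $k_0^{3d}$ from that theorem enters only as a polylogarithmic correction inside $\tilde\bigO$, so that the rate dependence on $\epsilon$ is unaffected.
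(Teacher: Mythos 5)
Your proposal is correct and follows essentially the same route as the paper: uniform Lipschitz bounds on the summands, a Hoeffding--Serfling concentration bound for each of the gradient, Hessian (via the operator-norm reduction), and third-order tensor (via Theorem~\ref{th:tensor_hoefdding_no_replacement} and the reduction $\|(\Tm_k-\nabla^3 f(\x_k))[\s]^2\|\leq\|\Tm_k-\nabla^3 f(\x_k)\|\,\|\s\|^2$), followed by inverting the tails to obtain the harmonic sample-size expression. The only cosmetic differences are your explicit $\delta/3$ union bound and the observation that the net factor $k_0^{3d}$ contributes a factor polynomial in $d$ (rather than polylogarithmic) after taking logarithms, which the lemma's $\tilde{\bigO}$ explicitly absorbs.
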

\begin{proof}

\textbf{Gradient}

Let $n_g := | \S^g |$ and note that by the triangle inequality as well as Lipschitz continuity of $\nabla f$ (Assumption~\ref{a:continuity}) we have
\begin{equation}
\| \g(\x) \| = \frac{1}{n_g}\sum_{i \in \S^g }\| \nabla f_i(\x) \| \leq L_g := \sigma_g
\end{equation}

We then apply Theorem~\ref{th:Matrix_Hoeffding_Serfling_Inequality} on the gradient vector and require the probability of a deviation larger or equal to $t$ to be lower than some $\delta \in (0,1]$.
\begin{align}
P\left(\norm{\g(\x)-\nabla f(\x)}>t \right)
&\leq 2d \exp \bigg( -  \frac{n_g t^2}{8 \sigma_g^2 (1+1/n_g) (1- n_g/N)} \bigg) \overset{!}{\leq} \delta
\end{align}
Taking the log on both side, we get
\begin{equation}
- \frac{n_g t^2}{8 \sigma_g^2 (1+1/n_g) (1- n_g/N)} \overset{!}{\leq}  \log \frac{\delta}{2d},
\end{equation}
which implies
\begin{align}
n_g t^2 & \overset{!}{\geq}  \log \frac{2d}{\delta} \left(8 \sigma_g^2 (1+1/n_g) (1- n_g/N) \right) \nonumber \\
&= \log \frac{2d}{\delta} \left( 8 \sigma_g^2 \left(1 + 1/n_g - n_g/N - 1/N \right) \right)
\end{align}

Since $\frac{1}{n_g} - \frac{1}{N} < 1 $, we instead require the following simpler condition,
\begin{align}
n_g t^2 &\geq \log \frac{2d}{\delta} \left( 8 \sigma_g^2 \left(2 - n_g/N \right) \right) \nonumber \\
\implies&  n_g \cdot \left( t^2 + \log \frac{2d}{\delta} \frac{8 \sigma_g^2}{N}  \right) \geq \log \frac{2d}{\delta} \left( 16 \sigma_g^2 \right) \nonumber \\
\implies& n_g \geq \frac{16 \sigma_g^2 \log \frac{2d}{\delta}}{\left( t^2 + \frac{8 \sigma_g^2}{N}  \log \frac{2d}{\delta} \right)}
\end{align}
Finally, we can simply choose $t = \kappa_g \epsilon$ in order to satisfy Eq.~\eqref{eq:sampling_g}.

\textbf{Hessian}

Again, let $n_b := | \S^b |$ and note that by the triangle inequality as well as Lipschitz continuity of $\nabla^2 f$ (Assumption~\ref{a:continuity}) we have 
\begin{equation*}
\|\Bm(\x) \|\leq \frac{1}{n_b}\sum_{i \in \S^b }\| \nabla^2 f_i(\x) \| \leq L_g := \sigma_b
\end{equation*}

Now we apply the matrix Hoeffding's inequality stated in Theorem~\ref{th:Matrix_Hoeffding_Serfling_Inequality} on the Hessian and require the probability of a deviation larger or equal to $t$ to be lower than some $\delta \in (0,1]$.
\begin{align}
P\left(\norm{\Bm(\x) - \nabla^2 f(\x)} > t \right)
&\leq 2d \exp \bigg( -  \frac{n_b t^2}{8 \sigma_b^2 (1+1/n_b) (1- n_b/N)} \bigg) \overset{!}{\leq} \delta
\end{align}
Taking the log on both side, we get
\begin{equation}
- \frac{n_b t^2}{8 \sigma_b^2 (1+1/n_b) (1- n_b/N)} \leq \log \frac{\delta}{2d},
\end{equation}
which implies
\begin{align}
n_b t^2 &\geq \log \frac{2d}{\delta} (8 \sigma_b^2 (1+1/n_b) (1- n_b/N)) \nonumber \\
&= \log \frac{2d}{\delta} \left( 8 \sigma_b^2 \left(1 + 1/n_b - n_b/N - 1/N \right) \right)
\end{align}

Since $\frac{1}{n_b} - \frac{1}{N} < 1$, we instead require the following simpler condition,
\begin{align}
n_b t^2 &\geq \log \frac{2d}{\delta} \left( 8 \sigma_b^2 \left(2 - n_b/N \right) \right) \nonumber \\
\implies&  n_b \cdot \left( t^2 + \log \frac{2d}{\delta} \frac{8 \sigma_b^2}{N}  \right) \geq \log \frac{2d}{\delta} \left( 16 \sigma_b^2 \right) \nonumber \\
\implies& n_b \geq \frac{16 \sigma_b^2 \log \frac{2d}{\delta}}{\left( t^2 + \frac{8 \sigma_b^2}{N}  \log \frac{2d}{\delta} \right)}
\end{align}
Finally, we can simply choose $t = \kappa_b \epsilon^{2/3}$ in order to satisfy Eq.~\eqref{eq:sampling_b} since $\forall \s \in \R^d$,
\begin{equation}
\| (\Bm(\x) - \nabla^2 f(\x)) \s \| \leq \| (\Bm(\x) - \nabla^2 f(\x)) \| \cdot \|\s \| \leq \kappa_b \epsilon^{2/3} \| \s \|, 
\end{equation}

\textbf{Third-order derivative}
Let $n_t := | \S^t |$ and assume that $a\leq \nabla^3 f_i (\u_1,\ldots,\u_k)\leq b$ for all $i \in \left\{1,\ldots,n\right\}$ and $(\u_1,\ldots,\u_k) \in \mathbb{R}^{d_1 \times \dots \times d_k}$. Define $\sigma_t = (b-a)$.

We apply Theorem~\ref{th:tensor_hoefdding_no_replacement} and require the probability of a deviation larger or equal to $t$ to be lower than some $\delta \in (0,1]$.
\begin{align}
P\left(\norm{\Tm(\x) - \nabla^3 f(\x)} > t \right)
&\leq k_0^{3d} \cdot 2 \exp \bigg( -  \frac{n_t^2 t^2}{2 \sigma_t^2 (n_t+1) (1- n_t/N)} \bigg) \overset{!}{\leq} \delta
\end{align}
Taking the log on both side, we get
\begin{equation}
- \frac{n_t^2 t^2}{2 \sigma_t^2 (n_t+1) (1- n_t/N)} \leq \log \frac{\delta}{2 k_0^{3d}},
\end{equation}
which implies
\begin{align}
n_t^2 t^2 &\geq \log \frac{2 k_0^{3d}}{\delta} (2 \sigma_t^2 (n_t+1) (1- n_t/N)) \nonumber \\
&= \log \frac{2 k_0^{3d}}{\delta} \left(2 \sigma_t^2 \left(n_t + 1 - n_t^2/N - n_t/N \right) \right) \nonumber \\
\implies& n_t t^2 \geq \log \frac{2 k_0^{3d}}{\delta} \left(2 \sigma_t^2 \left(1 + 1/n_t - n_t/N - 1/N \right) \right)
\end{align}
Since $\frac{1}{n_t} - \frac{1}{N} < 1$, we instead require the following simpler condition,
\begin{align}
n_t t^2 &\geq \log \frac{2 k_0^{3d}}{\delta} \left(2 \sigma_t^2 \left(2 - n_t/N \right) \right) \nonumber \\
\implies&  n_t \cdot \left( t^2 + \log \frac{2 k_0^{3d}}{\delta} \frac{ 2 \sigma_t^2}{N}  \right) \geq \log \frac{2 k_0^{3d}}{\delta} 4 \sigma_t^2 \nonumber \\
\implies&  n_t \geq \frac{4 \sigma_t^2 \log \frac{2 k_0^{3d}}{\delta}}{\left( t^2 + \frac{2 \sigma_t^2}{N}  \log \frac{2 k_0^{3d}}{\delta} \right)}.
\end{align}

Finally, we can simply choose $t = \kappa_t \epsilon^{1/3}$ in order to satisfy Eq.~\eqref{eq:sampling_t} since $\forall \s \in \R^d$,
\begin{align}
\| \Tm[\s]^2 - \nabla^3 f(\x) [\s]^2 \| &\leq \| \Tm[s] - \nabla^3 f(\x)[\s] \| \| \s \| \nonumber \\
&\leq \| \Tm - \nabla^3 f(\x) \| \| \s \|^2 \nonumber \\
&\leq \kappa_t \epsilon^{1/3} \| \s \|^2. 
\end{align}

\end{proof}

\subsection{Sampling with replacement}

We now turn our attention to sampling with replacement. First, we derive a concentration bound for a sum of i.i.d. tensors using a similar proof technique as in the previous subsection. All the proofs are very similar to the case of sampling without replacement and are therefore postponed to the supplementary material.

\begin{theorem}[Tensor Hoeffding Inequality]
\label{th:tensor_hoefdding}
Let $\X$ be a sum of $n$ i.i.d. tensors $\Y_i \in \R^{d_1 \times \dots \times d_k}$. Let $\u_1, \dots \u_k$ be such that $\| \u_i \| = 1$ and assume that for each tensor $i$, $a \leq \Y_i(\u_1,\ldots,\u_k) \leq b$. Let $\sigma := (b-a)$, then we have
\begin{equation*}
P(\norm{\X - \E \X} \geq t) \leq k_0^{(\sum_{i=1}^{k} d_i)} \cdot 2 \exp\left(-\frac{t^2}{2n \sigma^2}\right),
\end{equation*}
where $k_0 = \left(\frac{2k}{\log(3/2)}\right)$.
\end{theorem}

Based on Theorem~\ref{th:tensor_hoefdding} as well as standard concentration bounds for vectors and matrices (see e.g.~\cite{tropp2015introduction}), we prove the required sampling conditions of Condition~\ref{cond:sampling} hold for the specific sample sizes given in the next lemma.

\begin{lemma}
\label{lemma:sampling_conditions_with_replacement}
Consider the sub-sampled gradient, Hessian and third-order tensor defined in Eq.~\eqref{eq:sampled_derivatives}. The sampling conditions in Eqs.~\eqref{eq:sampling_g},~\eqref{eq:sampling_b} and~\eqref{eq:sampling_t} are satisfied with probability $1 - \delta,$ $\delta \in (0,1)$ for the following choice of the size of the sample sets $\S^g, \S^b$ and $\S^t$:
\begin{align}
n_g = \tilde{\bigO} \left( \frac{L_f^2}{\kappa_g^2 \epsilon^2} \right),
n_b = \tilde{\bigO} \left( \frac{L_g^2}{\kappa_b^2 \epsilon^{4/3}} \right),
n_t = \tilde{\bigO} \left( \frac{L_b^2}{\kappa_t^2 \epsilon^{2/3}} \right),
\end{align}
where $\tilde{\bigO}$ hides poly-logarithmic factors and a polynomial dependency to $d$.
\end{lemma}
\begin{proof}

The proof consists in using concentration inequalities to prove that there exists a sample size such that the sampled quantity is close enough to the expected value.\\

\textbf{Gradient}

Let $n_g := | \S^g |$ and note that by the triangle inequality as well as Lipschitz continuity of $\nabla f$ (Assumption~\ref{a:continuity}) we have
\begin{equation}
\| \g(\x) \| = \frac{1}{n_g}\sum_{i \in \S^g }\| \nabla f_i(\x) \| \leq L_g := \sigma_g
\end{equation}

We then apply Theorem~\ref{thm:intro-hoeffding} on the gradient vector and require the probability of a deviation larger or equal to $t$ to be lower than some $\delta \in (0,1]$.

\begin{align}
P\left(\norm{\g(\x)-\nabla f(\x)}>t \right)
&\leq d\exp \left( \frac{-t^2}{8 \sigma^2_g/n_g} \right) \overset{!}{\leq} \delta
\end{align}

Taking the log on both side, we get
\begin{equation}
\frac{-t^2}{8\sigma_g^2/n_g} = \frac{-t^2 \cdot n_g}{8 \sigma_g^2} \leq \log \frac{\delta}{d},
\end{equation}
which implies
\begin{align}
t^2 \cdot n_g &\geq (8 \sigma_g^2) \log \frac{d}{\delta} \nonumber \\
\implies& n_g \geq \frac{8 \sigma_g^2}{t^2} \log \frac{d}{\delta}.
\end{align}

Finally, we can simply choose $t = \kappa_g \epsilon$ in order to satisfy Eq.~\eqref{eq:sampling_g}.

\textbf{Hessian}

Again, let $n_b := | \S^b |$ and note that by the triangle inequality as well as Lipschitz continuity of $\nabla^2 f$ (Assumption~\ref{a:continuity}) we have 
\begin{equation*}
\|\Bm(\x) \|\leq \frac{1}{n_b}\sum_{i \in \S^b }\| \nabla^2 f_i(\x) \| \leq L_g := \sigma_b
\end{equation*}

Now we apply Theorem~\ref{thm:intro-hoeffding} on the Hessian and require the probability of a deviation larger or equal to $t$ to be lower than some $\delta \in (0,1]$.

\begin{align}
P\left(\norm{\Bm(\x) - \nabla^2 f(\x)} > t \right)
&\leq d\exp \left( \frac{-t^2}{8 \sigma_b^2/n_b} \right) \overset{!}{\leq} \delta
\end{align}

Taking the log on both side, we get
\begin{equation}
\frac{-t^2}{8\sigma_b^2/n_b} = \frac{-t^2 \cdot n_b}{8 \sigma_b^2} \leq \log \frac{\delta}{d},
\end{equation}
which implies
\begin{align}
t^2 \cdot n_b &\geq (8 \sigma_b^2) \log \frac{d}{\delta} \nonumber \\
\implies& n_b \geq \frac{8 \sigma_b^2}{t^2} \log \frac{d}{\delta}.
\end{align}

Finally, we can simply choose $t = \kappa_b \epsilon^{2/3}$ in order to satisfy Eq.~\eqref{eq:sampling_b} since $\forall \s \in \R^d$,
\begin{equation}
\| (\Bm(\x) - \nabla^2 f(\x)) \s \| \leq \| (\Bm(\x) - \nabla^2 f(\x)) \| \cdot \|\s \| \leq \kappa_b \epsilon^{2/3} \| \s \|, 
\end{equation}

\textbf{Third-order derivative}

We apply Theorem~\ref{th:tensor_hoefdding} on the normalized~\footnote{Note that we here consider the normalized sum. The reader can verify that the bound of Theorem~\ref{th:tensor_hoefdding} becomes $P(\norm{\X - \E \X} \geq t) \leq k_0^{(\sum_{i=1}^{k} d_i)} \cdot 2 \exp\left(-\frac{t^2 n}{2 \sigma^2}\right)$.} third-order derivative. Let $n_t := | \S^t |$ and define
\begin{equation}
\mathcal{Z} = \frac{1}{n_t} \sum_{i \in \S^t} \nabla^3 f_i(\x) - \nabla^3 f(\x) = \Tm(\x) - \nabla^3 f(\x).
\end{equation}

We then require the probability of a deviation larger or equal to $t$ to be lower than some $\delta \in (0,1]$.

\begin{align}
P\left(\norm{\Tm(\x) - \nabla^3 f(\x)} > t \right)
&\leq k_0^{3d} \cdot 2 \exp \left( -\frac{t^2 n_t}{2 \sigma_t^2} \right)  \overset{!}{\leq} \delta
\end{align}

Taking the log on both side, we get
\begin{equation}
-\frac{t^2 n_t}{2 \sigma_t^2} \leq \log \frac{\delta}{2 k_0^{3d}},
\end{equation}
which implies
\begin{align}
n_t \geq \frac{2 \sigma_t^2}{t^2} \log \frac{2 k_0^{3d}}{\delta}.
\end{align}

Finally, we can simply choose $t = \kappa_t \epsilon^{1/3}$ in order to satisfy Eq.~\eqref{eq:sampling_t} since $\forall \s \in \R^d$,
\begin{align}
\| \Tm[\s]^2 - \nabla^3 f(\x) [\s]^2 \| &\leq \| \Tm[s] - \nabla^3 f(\x)[\s] \| \| \s \| \nonumber \\
&\leq \| \Tm - \nabla^3 f(\x) \| \| \s \|^2 \nonumber \\
&\leq \kappa_t \epsilon^{1/3} \| \s \|^2. 
\end{align}

\end{proof}

\begin{remark}[Adaptive sampling strategy]
We note that prior work, such as~\citep{cartis2011adaptive, kohler2017sub} among others, have used sampling strategies that are iteration adaptive (i.e. the sample size changes at each iteration $k$). Such a strategy is also possible with our approach and would require a simple modification to Condition~\ref{cond:sampling}. For instance, in the case of the gradient, one could modify the condition $\| \g_k - \nabla f(\x_k) \| \leq \kappa_g \epsilon$ to $\| \g_k - \nabla f(\x_k) \| \leq \kappa_g \| \s \|$. This would change the requirement on the sample size to depend on the step length $\| \s_k \|$, which means that the sample size would be adapted at each iteration $k$. As pointed out in~\citep{kohler2017sub}, a potential drawback is that for a given iteration $k$, the step $\s_k$ is yet to be determined. Based on the Lipschitz continuity of the involved functions, ~\citep{kohler2017sub} argued that the previous step was a fair estimator of the current one, which was confirmed experimentally.
\end{remark}

\section{Worst-case complexity analysis}
\label{sec:analysis}

In the following, we provide a proof of convergence of \methodname~to a third-order critical point, i.e. a point $\x^*$ such that
\begin{equation}
    \chi_{f,i}(\x^*) \leq \epsilon_i \quad \text{ for } i=1, \dots, 3.
\label{eq:convergence_criterion}.
\end{equation}

The high-level idea of the analysis is to first show that the model decreases proportionally to the criticality measures at each iteration and then relate the model decrease to the function decrease. Since the function $f$ is lower bounded, it can only decrease a finite number of times, which therefore implies convergence. We start with a bound on the model decrease in terms of the step length $\| \s \|$.

\begin{restatable}{lemma}{modeldecrease}
\label{lemma:model_decrease}
For any $\x_k \in \R^d$, the step $\s_k$ (satisfying Condition~\ref{cond:approximate_min}) is such that
\begin{align}
\phi_k({\bf 0}) - \phi_k(\s_k) > \frac{\sigma_k}{4} \| \s_k \|^4.
\label{eq:decrease_model}
\end{align}
\end{restatable}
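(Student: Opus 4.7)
The plan is to observe that this lemma is essentially a direct algebraic consequence of the first part of Condition~\ref{cond:approximate_min} (namely $m_k(\s_k) < m_k(\mathbf{0})$) combined with the definition of the model. The key observation is that the regularizer $\tfrac{\sigma_k}{4}\|\s\|^4$ vanishes at $\s = \mathbf{0}$, so $m_k(\mathbf{0}) = \phi_k(\mathbf{0})$, while at $\s_k$ the regularizer contributes $\tfrac{\sigma_k}{4}\|\s_k\|^4$.

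More concretely, I would proceed in three short steps. First, evaluate the model at zero: from Eq.~\eqref{eq:model},
\[
m_k(\mathbf{0}) = \phi_k(\mathbf{0}) + \tfrac{\sigma_k}{4}\|\mathbf{0}\|^4 = \phi_k(\mathbf{0}) = f(\x_k).
\]
Second, write out the model at $\s_k$:
\[
m_k(\s_k) = \phi_k(\s_k) + \tfrac{\sigma_k}{4}\|\s_k\|^4.
\]
Third, invoke the inequality $m_k(\s_k) < m_k(\mathbf{0})$ from Condition~\ref{cond:approximate_min} and rearrange:
\[
\phi_k(\s_k) + \tfrac{\sigma_k}{4}\|\s_k\|^4 < \phi_k(\mathbf{0}) \quad\Longleftrightarrow\quad \phi_k(\mathbf{0}) - \phi_k(\s_k) > \tfrac{\sigma_k}{4}\|\s_k\|^4,
\]
which is exactly the claim.

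There is essentially no obstacle here; the result is a tautological restatement of the sufficient-decrease half of the approximate-minimization condition, repackaged so that all the decrease is attributed to the Taylor part $\phi_k$ rather than to the regularized model $m_k$. The only subtlety worth flagging is that the inequality is strict rather than weak, which is inherited directly from the strict inequality in Condition~\ref{cond:approximate_min}; in particular, $\s_k = \mathbf{0}$ is ruled out by that condition, which is consistent with the statement being vacuously satisfied only when $\|\s_k\| > 0$.
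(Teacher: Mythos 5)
Your proposal is correct and follows essentially the same argument as the paper: note that $m_k(\mathbf{0}) = \phi_k(\mathbf{0}) = f(\x_k)$, apply the strict decrease $m_k(\s_k) < m_k(\mathbf{0})$ from Condition~\ref{cond:approximate_min}, and rearrange using $m_k(\s_k) = \phi_k(\s_k) + \tfrac{\sigma_k}{4}\|\s_k\|^4$. No gaps.
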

\begin{proof}
Note that $m_k({\bf 0}) = f(\x_k)$. Using the optimality conditions introduced in Condition~\ref{cond:approximate_min}, we get
\begin{align}
0 < m_k({\bf 0}) - m_k(\s_k) = \phi_k({\bf 0}) - \phi_k(\s_k) - \frac{\sigma_k}{4} \| \s_k \|^4,
\end{align}
which directly implies the desired result.

\end{proof}

In order to complete our claim of model decrease, we prove that the length of the step $\s_k$ can not be arbitrarily small compared to the first three derivatives of the objective function.

\subsection{Auxiliary lemmas}

\begin{restatable}{lemma}{boundsnorm}
\label{lemma:bound_s_norm}
Suppose that Condition~\ref{cond:sampling} holds with the choice $\kappa_g = \frac14$, $\kappa_b = \frac14$, $\kappa_t = \frac12$. For any $\x_k \in \R^d$, the length of the step $\s_k$ (satisfying Condition~\ref{cond:approximate_min}) is such that
\begin{equation}
\| \s_k \| \geq \kappa_k^{-1/3} \left( \chi_{f,1}(\x_k + \s_k) - \frac12 \epsilon_1 \right)^{1/3},
\label{eq:bound_s_norm}
\end{equation}
where $\kappa_k = \left( \sigma_k + \frac{L_t}{2} + \theta + \frac{1}{4} \right)$.
\end{restatable}

\begin{proof}

We start with the following bound,
\begin{align}
\| \nabla f(\x_k + \s_k) \| \leq \| \nabla f(\x_k + \s_k) - \nabla \phi_k(\s_k) \| + \| \nabla \phi_k(\s_k) \|.
\label{eq:bound_nabla_f}
\end{align}

We bound the second term in the RHS of Eq.~\eqref{eq:bound_nabla_f} using the termination condition presented in Eq.~\eqref{eq:termination_criterion}. We get
\begin{align}
\| \nabla \phi_k(\s_k) \| &\leq \left \| \nabla \phi_k(\s_k) + \sigma_k \s_k \| \s_k \|^2 \right \| + \sigma_k \| \s_k \|^3 \nonumber \\
&= \| \nabla m_k(\s_k) \| + \sigma_k \| \s_k \|^3 \nonumber \\
&\leq \theta \| \s_k \|^3 + \sigma_k \| \s_k \|^3 \nonumber \\
&\leq \left( \theta + \sigma_k \right) \| \s_k \|^3.
\end{align}

For the first term in the RHS of Eq.~\eqref{eq:bound_nabla_f}, we will use the following standard inequality (see e.g.~\citep{cartis2020concise,cartis2022evaluation}):
\begin{equation}
\| \nabla f(\y) - \nabla f(\x) - \nabla^2 f(\x) (\y-\x) - \nabla^3 f(\x)[\y-\x]^2 \| \leq \frac{L_t}{2} \| \y-\x \|^3 \quad \forall \x, \y \in \R^d.
\label{eq:third_order_Lipschitz}
\end{equation}

This gives us the following bound
\begin{align}
\| \nabla f(\x_k + & \s_k) - \nabla \phi_k(\s_k) \| = \left \| \nabla f(\x_k + \s_k) - \g_k - \Bm_k \s_k - \frac{1}{2} \Tm_k [\s_k]^2  \right \| \nonumber \\
&\leq \left \| \nabla f(\x_k + \s_k) - \nabla f(\x_k) - \nabla^2 f(\x_k) \s_k - \frac{1}{2} \nabla^3 f(\x_k) [\s_k]^2  \right \| \nonumber \\
&+ \| \g_k - \nabla f(\x_k)\| + \| (\Bm_k - \nabla^2 f(\x_k)) \s_k \| + \frac{1}{2} \| \Tm_k[\s_k]^2 - \nabla^3 f(\x_k) [\s_k]^2 \| \nonumber \\
&\leq \frac{L_t}{2}  \| \s_k \|^3 + \kappa_g \epsilon + \kappa_b \epsilon^{2/3} \| \s_k \| +\frac{1}{2} \kappa_t \epsilon^{1/3} \| \s_k \|^2,
\end{align}
where the last inequality uses Eq.~\eqref{eq:third_order_Lipschitz} and Condition~\ref{cond:sampling} where we set $\epsilon_1 = \epsilon$.

Next, we apply the Young's inequality for products which states that if $a, b \in \R_{\geq 0}$ and $p, q \in \R_{>1}$ such that $1/p + 1/q = 1$ then
\begin{equation}
ab \le \frac{a^p}{p} + \frac{b^q}{q}.
\label{eq:young_ineq}
\end{equation}

We obtain
\begin{align}
\| \nabla f(\x_k + \s_k) \| &\leq \left( \theta + \sigma_k \right) \| \s_k \|^3 + \frac{L_t}{2} \| \s_k \|^3 + \kappa_g \epsilon + \frac{ \kappa_b}{3} \left( 2 \epsilon +  \| \s_k \|^3 \right) + \frac{\kappa_t}{6}  \left( \epsilon + 2 \| \s_k \|^3 \right) \nonumber \\
&= \left( \sigma_k + \theta + \frac{L_t}{2} + \frac{\kappa_b}{3} + \frac{\kappa_t}{3} \right) \| \s_k \|^3 + \left( \kappa_g + \frac{2 \kappa_b}{3} + \frac{\kappa_t}{6} \right) \epsilon,
\end{align}

therefore
\begin{equation}
\left( \sigma_k + \theta + \frac{L_t}{2} + \frac{\kappa_b}{3} + \frac{\kappa_t}{3} \right)^{-1} \left( \| \nabla f(\x_k + \s_k) \| - \left( \kappa_g + \frac{2 \kappa_b}{3} + \frac{\kappa_t}{6} \right) \epsilon \right) \leq \| \s_k \|^3.
\end{equation}

Choosing $\kappa_g = \frac{1}{4}$, $\kappa_b = \frac{1}{4}$, $\kappa_t = \frac12$,
\begin{equation}
\left( \sigma_k + \theta + \frac{L_t}{2} + \frac{1}{4} \right)^{-1} \left( \| \nabla f(\x_k + \s_k) \| - \frac12 \epsilon \right) \leq \| \s_k \|^3.
\end{equation}

\end{proof}

We next prove that the length of the step $\s_k$ can not be arbitrarily small compared to $\chi_{f,2}$. We first need an additional lemma that relates the step length $\s_k$ to the second criticality measure $\chi_{f,2}$. Proving such result requires the following auxiliary lemma.

\begin{restatable}{lemma}{BLipschitz}
Suppose that Condition~\ref{cond:sampling} holds. For all $\x_k, \s \in \R^d$,
\begin{align}
\| \nabla^2 f(\x_k + \s) - \nabla_\s^2 \phi_k(\s) \|
\leq \left( \frac{L_t}{2} + \frac{\kappa_t}{2} \right) \| \s \|^2 + \left(\kappa_b + \frac{\kappa_t}{2} \right) \epsilon_2.
\label{eq:B_Lipschitz}
\end{align}
\label{lemma:B_Lipschitz}
\end{restatable}
\begin{proof}

We will use the following standard inequality (see e.g.~\citep{cartis2020concise,cartis2022evaluation}):
\begin{equation}
\| \nabla^2 f(\y) - \nabla^2 f(\x) - \nabla^3 f(\x) (\y-\x) \| \leq \frac{L_t}{2} \sqnorm{\y-\x} \quad \forall \x, \y \in \R^d.
\label{eq:third_order_Lipschitz_2}
\end{equation}

Recall that
\begin{equation}
\nabla^2_\s \phi_k(\s) = \Bm_k + \Tm_k [\s],
\end{equation}
therefore
\begin{align}
& \| \nabla^2 f(\x_k + \s) - \nabla_\s^2 \phi_k(\s) \| 
= \| \nabla^2 f(\x_k + \s) - \Bm_k - \Tm_k [\s] \| \nonumber \\
&\quad \leq \| \nabla^2 f(\x_k + \s) - \nabla^2 f(\x_k) - \nabla^3 f(\x_k)[\s] \| + \| \nabla^2 f(\x_k) - \Bm_k \| + \| \nabla^3 f(\x_k)[\s] - \Tm_k [\s] \| \nonumber \\
&\quad \leq \frac{L_t}{2} \| \s \|^2 + \kappa_b \epsilon_2 + \kappa_t \epsilon_2^{1/2} \| \s \|, \nonumber
\label{eq:hessian_f_m_delta}
\end{align}
where the last inequality uses Eq.~\eqref{eq:third_order_Lipschitz_2} and Condition~\ref{cond:sampling} with $\epsilon_2 = \epsilon^{2/3}$.

We again apply the Young's inequality for products stated in Eq.~\ref{eq:young_ineq} which yields
\begin{align}
\| \nabla^2 f(\x_k + \s) - \nabla_\s^2 \phi_k(\s) \|
\leq \frac{L_t}{2} \| \s \|^2 + \kappa_b \epsilon_2 + \frac{\kappa_t}{2} \| \s \|^2 + \frac{\kappa_t}{2} \epsilon_2.
\end{align}

\end{proof}


\begin{restatable}{lemma}{boundsnormxitwo}
\label{lemma:bound_s_norm_xi2}
Suppose that Condition~\ref{cond:sampling} holds with the choice $\kappa_g = \frac14$, $\kappa_b = \frac14$, $\kappa_t = \frac12$. For any $\x_k \in \R^d$, the length of the step $\s_k$ (satisfying Condition~\ref{cond:approximate_min}) is such that
\begin{align}
\|\s_k\| \geq \kappa_{k,2}^{-1/2} \left( \chi_{f,2}(\x_k + \s_k) - \frac{1}{2} \epsilon_2 \right)^{1/2},
\label{eq:lower_stepbound_lambda}
\end{align}
where $\kappa_{k,2} = \left( 3\sigma_k + \frac{L_t}{2} + \theta + \frac14 \right)$.
\end{restatable}
\begin{proof}

Using the definition of the model in Eq.~\eqref{eq:model} and the fact that $$\min_\z[a(\z)+b(\z)] \geq \min_\z[a(\z)] + \min_\z[b(\z)],$$
we find that
\begin{align}
\lambda_{min}(\nabla^2 f(\x_k + & \s_k)) = \min_{\|\y\|=1} \nabla^2 f(\x_k+\s_k)[\y]^2  \\
&= \min_{\|\y\|=1} \left( \nabla^2 f(\x_k+\s_k) - \nabla_\s^2 \phi_k(\s_k) - \frac{\sigma_k}{4} \nabla_\s^2 \|\s_k\|^{4} + \nabla_\s^2 m_k(\s_k) \right) [\y]^2 \nonumber \\
&\geq \min_{\|\y\|=1} \left( \nabla^2 f(\x_k+\s_k) - \nabla_\s^2 \phi_k(\s_k) \right) [\y]^2 + \frac{\sigma_k}{4} \min_{\|\y\|=1} \left( - \nabla_\s^2 \|\s_k\|^{4} \right) [\y]^2
+ \min_{\|\y\|=1} \nabla_\s^2 m_k(\s_k) [\y]^2 \nonumber
\end{align}

Considering each term in turn, and using Lemma~\ref{lemma:B_Lipschitz}, we see that
\begin{align}
\min_{\|\y\|=1} & \left( \nabla^2 f(\x_k+\s_k) - \nabla_\s^2 \phi_k(\s_k) \right) [\y]^2 \nonumber \\
& \geq \min_{\|\y_1\|=\|\y_2\|=1} \left( \nabla^2 f(\x_k+\s_k) - \nabla_\s^2 \phi_k(\s_k) \right) [\y_1,\y_2] \nonumber \\
& \geq - \max_{\|\y_1\|=\|\y_2\|=1} \left | \left( \nabla^2 f(\x_k+\s_k) - \nabla_\s^2 \phi_k(\s_k) \right) [\y_1,\y_2] \right | \nonumber \\
& = - \| \nabla^2 f(\x_k+\s_k)  - \nabla_s^2 \phi_k(\s_k) \|_{[2]}^{} \nonumber \\
&\stackrel{\eqref{eq:B_Lipschitz}}{\geq} - \left( \frac{L_t}{2} + \frac{\kappa_t}{2} \right) \| \s_k \|^2 - \left(\kappa_b + \frac{\kappa_t}{2} \right) \epsilon_2.
\end{align}

Note that for the second-order derivative $\nabla_\s^2 m_k(\s)$, we get:
\begin{equation}
\nabla^2_\s m_k(\s) = \Bm_k + \Tm_k [\s] + \frac{\sigma_k}{4} \nabla^2_\s \| \s \|^4,
\end{equation}
where
\begin{equation}
\frac14 \nabla^2_\s \| \s \|^4 = \nabla_\s \; \s \| \s \|^2 = \| \s \|^2 \Im + 2 \s \s^\top \succcurlyeq \| \s \|^2   
\label{eq:hessian_s}
\end{equation}

Using Eq.~\eqref{eq:hessian_s}, we get that $\frac14 \nabla_\s^2 \left(\|\s_k\|^{4} \right) [\y]^2= 2 (\s_k^\top \y)^2 + \| \s_k \|^2 \| \y \|^2$, therefore
\begin{align}
\min_{\|\y\|=1} &\left( - \nabla_\s^2 (\|\s_k\|^4) \right) [\y]^2 
= - \max_{\|\y\|=1} \nabla_\s^2 (\|\s_k\|^4) [\y]^2 
= - 12 \|\s_k\|^2.
\end{align}

From Eq.~\eqref{eq:def_chi_m}, we have $\min_{\|\y\|=1} \nabla_s^2 m_k(\s_k) [\y]^2 = \lambda_{min}(\nabla_\s^2 m_k(\s_k))$. Combined with the last two equations, we get that
\begin{align}
-\lambda_{min}(\nabla^2 f(\x_k + \s_k)) &\leq \left( \frac{L_t}{2} + \frac{\kappa_t}{2} \right) \| \s_k \|^2 \nonumber \\
& + \left(\kappa_b + \frac{\kappa_t}{2} \right) \epsilon_2 + 3\sigma_k \|\s_k\|^2 -\min[0, \lambda_{min}(\nabla_\s^2 m_k(\s_k))].
\label{eq:lambda_f}
\end{align}

As the right hand side of the above equation is non-negative, we can rewrite Eq.~\eqref{eq:lambda_f} as
\begin{align}
\max[0, -\lambda_{min}(\nabla^2 f(\x_k+\s_k))] &\leq  \left( \frac{L_t}{2} + \frac{\kappa_t}{2} +  3\sigma_k \right) \|\s_k\|^2 + \left(\kappa_b + \frac{\kappa_t}{2} \right) \epsilon_2 \nonumber \\
&+ \max[0, -\lambda_{min}(\nabla_\s^2 m_k(\s_k))].
\end{align}

Combining the above with Eq.~\eqref{eq:def_chi_f} and Eq.~\eqref{eq:def_chi_m}, and with Eq.~\eqref{eq:termination_criterion} for $i=2$, we conclude
\begin{align}
\chi_{f,2}(\x_k + \s_k) &\leq  \left( \frac{L_t}{2} + \frac{\kappa_t}{2} +  3\sigma_k \right) \|\s_k\|^2 + \left(\kappa_b + \frac{\kappa_t}{2} \right) \epsilon_2 + \chi_{m,2}(\x_k,\s_k) \nonumber \\
&\leq \left( \frac{L_t}{2} + \frac{\kappa_t}{2} +  3\sigma_k + \theta \right) \|\s_k\|^2 + \left(\kappa_b + \frac{\kappa_t}{2} \right) \epsilon_2,
\end{align}
which implies
\begin{align}
\left( \frac{L_t}{2} + \frac{\kappa_t}{2} + 3\sigma_k + \theta \right) \|\s_k\|^2 \geq \chi_{f,2}(\x_k + \s_k) -  \left(\kappa_b + \frac{\kappa_t}{2} \right) \epsilon_2.
\end{align}

Choosing $\kappa_b = \frac{1}{4}, \kappa_t = \frac{1}{2}$, we conclude
\begin{align}
\|\s_k\|^2 \geq \left( 3\sigma_k + \frac{L_t}{2} + \theta + \frac14 \right)^{-1} \left( \chi_{f,2}(\x_k + \s_k) - \frac12 \epsilon_2 \right) .
\end{align}

\end{proof}


Finally, we prove that the length of the step $\s_k$ can again not be arbitrarily small compared to $\chi_{f,3}$.

\begin{restatable}{lemma}{TLipschitz}
Suppose that Condition~\ref{cond:sampling} holds. For all $\x_k, \s \in \R^d$,
\begin{align}
\| \nabla^3 f(\x_k + \s) - \nabla_\s^3 \phi_k(\s) \|
\leq L_t \| \s \| + \kappa_t \epsilon_3.
\label{eq:T_Lipschitz}
\end{align}
\label{lemma:T_Lipschitz}
\end{restatable}
\begin{proof}

Recall that
\begin{equation}
\nabla^3_\s \phi_k(\s) = \Tm_k,
\end{equation}
therefore
\begin{align}\label{eq:T_f_m_delta}
\| \nabla^3 f(\x_k + \s) - \nabla_\s^3 \phi_k(\s) \|
&= \| \nabla^3 f(\x_k + \s) - \Tm_k \| \nonumber \\
&\leq \| \nabla^3 f(\x_k + \s) - \nabla^3 f(\x_k) \| + \| \nabla^3 f(\x_k) - \Tm_k \| \nonumber \\
&\leq L_t \| \s \| + \kappa_t \epsilon_3,
\end{align}
where the last inequality uses the Lipschitz property of $\nabla^3 f(\cdot)$ and Condition~\ref{cond:sampling} with $\epsilon_3 = \epsilon^{1/3}$.

\end{proof}

\begin{restatable}{lemma}{boundsnormxithree}
\label{lemma:bound_s_norm_xi3}
Suppose that Condition~\ref{cond:sampling} holds with the choice $\kappa_g = \frac14$, $\kappa_b = \frac14$, $\kappa_t = \frac12$. For any $\x_k \in \R^d$, the length of the step $\s_k$ (satisfying Condition~\ref{cond:approximate_min}) is such that
\begin{align}
\|\s_k\| \geq \kappa_{k,3}^{-1} \left( \chi_{f,3}(\x_k + \s_k) - \frac12 \epsilon_3 \right),
\label{eq:lower_stepbound_lambda3}
\end{align}
where $\kappa_{k,3} = \left( L_t + \frac{\sigma_k}{2} + \theta \right)$.
\end{restatable}
\begin{proof}

Using the definition of the model in Eq.~\eqref{eq:model} and the fact that $$\max_\z[a(\z)+b(\z)] \leq \max_\z[a(\z)] + \max_\z[b(\z)],$$
we find that
\begin{align}
\max_{\y \in \M_{k+1}} \nabla^3 & f(\x_k+\s_k)[\y]^3
= \max_{\y \in \M_{k+1}} \left( \nabla^3 f(\x_k+\s_k) \pm \nabla_\s^3 m_k(\s_k) \right) [\y]^2 \nonumber \\
&= \max_{\y \in \M_{k+1}} \left( \nabla^3 f(\x_k+\s_k) - \nabla_\s^3 \phi_k(\s_k) - \frac{\sigma_k}{4} \nabla_\s^3 \|\s_k\|^{4} + \nabla_\s^3 m_k(\s_k) \right) [\y]^3 \nonumber \\
&\leq \max_{\y \in \M_{k+1}} \left( \nabla^3 f(\x_k+\s_k) - \nabla_\s^3 \phi_k(\s_k) \right) [\y]^3 + \frac{\sigma_k}{4} \max_{\y \in \M_{k+1}} \left( - \nabla_\s^3 \|\s_k\|^{4} \right) [\y]^3 + \max_{\y \in \M_{k+1}} \nabla_\s^3 m_k(\s_k) [\y]^3
\end{align}

Considering each term in turn, and using Lemma~\ref{lemma:T_Lipschitz}, we see that
\begin{align}
\max_{\y \in \M_{k+1}} & \left( \nabla^3 f(\x_k+\s_k) - \nabla_\s^3 \phi_k(\s_k) \right) [\y]^3 \nonumber \\
& \leq \max_{\|\y_1\|=\|\y_2\|=\|\y_3\|=1} \left( \nabla^3 f(\x_k+\s_k) - \nabla_\s^3 \phi_k(\s_k) \right) [\y_1,\y_2,\y_3] \nonumber \\
& = \| \nabla^3 f(\x_k+\s_k)  - \nabla_s^3 \phi_k(\s_k) \|_{[3]} \nonumber \\
&\stackrel{\eqref{eq:T_Lipschitz}}{\leq} L_t \| \s_k \| + \kappa_t \epsilon_3.
\end{align}

One can also show~\footnote{Note that given a vector $\y \in \R^d$ and a third-order tensor $T \in \R^{d \times d \times d}$, we have $T[\y]^3 = \sum_{i,j,k} T_{i,j,k} y_i y_j y_k$ where $T_{i,j,k}$ denotes the $(i,j,k)$-th element of $T$.} that
\begin{equation}
\max_{\y \in \M_{k+1}} \left( - \nabla_\s^3 \|\s_k\|^{4} \right) [\y]^3 = - \max_{\y \in \M_{k+1}} 2 \s_k^\top \y \| \y \|^2 \leq 2 \| \s_k \|
\end{equation}

Combining the last two equations, we get that
\begin{align}
\chi_{f,3}(\x_k + \s_k) = \max_{\y \in \M_{k+1}} \nabla^3 f(\x_k+\s_k)[\y]^3 &\leq \left( L_t + \frac{\sigma_k}{2} \right) \| \s_k \| + \kappa_t \epsilon_3 + \| \nabla_\s^3 m_k(\s_k) \| \nonumber \\
&\stackrel{\eqref{eq:termination_criterion}}{\leq} \left( L_t + \frac{\sigma_k}{2} + \theta \right) \| \s_k \| + \kappa_t \epsilon_3
\label{eq:lambda_f_3}
\end{align}
which implies
\begin{align}
\left( L_t + \frac{\sigma_k}{2} + \theta \right) \| \s_k \| \geq \chi_{f,3}(\x_k + \s_k) - \kappa_t \epsilon_3
\end{align}

Choosing $\kappa_t = \frac{1}{2}$, we conclude
\begin{align}
\|\s_k\| \geq \left( L_t + \frac{\sigma_k}{2} + \theta \right)^{-1} \left( \chi_{f,3}(\x_k + \s_k) - \frac12 \epsilon_3 \right) .
\end{align}

\end{proof}


A key lemma to derive a worst-case complexity bound on the total number of iterations required to reach a third-order critical point is to bound the number of unsuccessful iterations $|\mathcal{U}_k|$ as a function of the number of successful ones $|\mathcal{S}_k|$, that have occurred up to some iteration $k > 0$.

\begin{lemma}
\label{lemma:total_nb_steps}
The steps produced by Algorithm~\ref{alg:stm} guarantee that if $\sigma_k \leq \sigma_{max}$ for $\sigma_{max} > 0$, then the total number of iterations $k = |\U_k| + |\S_k|$ is such that $k \leq C(\gamma_1, \gamma_2, \sigma_{max}, \sigma_0)$ where
\begin{equation*}
C(\gamma_1, \gamma_2, \sigma_{max}, \sigma_0) := \left( 1 + \frac{|\log \gamma_1|}{\log \gamma_2} \right)|\S_k| + \frac{1}{\log \gamma_2} \log \left( \frac{\sigma_{max}}{\sigma_0} \right).
\end{equation*}
\end{lemma}

The proof of this Lemma can be found in~\cite{cartis2011adaptive} (Theorem 2.1). A closed-form expression for $\sigma_{max}$ is provided in Lemma~\ref{lemma:successful_sigma} in the supplementary material.

\subsection{Main result}

We are now ready to state the main result of this section that provides a bound on the number of iterations required to reach a third-order critical point.

\begin{restatable}{theorem}{worstcase}[Worst-case complexity]
\label{th:worst_case_complexity}
Let $f_{low}$ be a lower bound on $f$ and assume Condition~\ref{cond:approximate_min} holds. We define $\kappa_s = \left( \sigma_{max} + \frac{L_t}{2} + \theta + \frac14 \right), \kappa_{s,2} = \left( 3 \sigma_{max} + \frac{L_t}{2} + \theta + \frac14 \right)$, $\kappa_{s,3} = \left( L_t + \frac{\sigma_{max}}{2} + \theta \right)$ and $\kappa_{max} = \max(\sqrt[3]{2} \kappa_s^{4/3}, 2 \kappa_{s,2}^{2}, 8 \kappa_{s,3})$.
Then, given $\epsilon_i > 0, i=1 \dots 3$, with probability $1 - \delta'$ for $\delta' > 0$, Algorithm~\ref{alg:stm} needs at most
\begin{equation*}
\ceil*{\K_{succ}(\epsilon) := \frac{8 \kappa_{max} (f(\x_0) - f_{low})}{\eta_1 \sigma_{min}}  \max(\epsilon_1^{-4/3}, \epsilon_2^{-2}, \epsilon_3^{-4})}
\end{equation*}
successful iterations
and
\begin{align}
\K(\epsilon) := \ceil*{ C(\gamma_1, \gamma_2, \sigma_{max}, \sigma_0) \cdot \K_{succ}(\epsilon)}.
\label{eq:K_outer}
\end{align}
total iterations to reach an iterate $\x^*$ such that
$\chi_{f,i}(\x^*) \leq \epsilon_i \quad \text{ for } i=1,\dots,3$.
\end{restatable}
\begin{proof}
First, let $\kappa_{s} = \left( \sigma_{max} + \frac{L_t}{2} + \theta + \frac14 \right)$.
For each successful iteration $k$, the function decrease in terms of the first-order criticality measure is
\begin{align}
f(\x_k) - f(\x_{k+1}) &\geq \eta_1(f(\x_k) - \phi_k(\s_k)) \nonumber \\
&\stackrel{\eqref{eq:decrease_model}}{\geq} \frac{1}{4} \eta_1 \sigma_{min} \vectornorm{\s_k}^4_2 \nonumber \\
&\stackrel{\eqref{eq:bound_s_norm}}{\geq} \frac{1}{4} \eta_1 \sigma_{min} \kappa_k^{-4/3} \left( \| \nabla f(\x_k + \s_k) \| - \frac12 \epsilon_1 \right)^{4/3} \nonumber \\
&\geq \frac{1}{4} \eta_1 \sigma_{min} \kappa_s^{-4/3} \left(\frac12 \epsilon_1 \right)^{4/3} \nonumber \\
&\geq \frac{1}{8 \sqrt[3]{2}} \eta_1 \sigma_{min} \kappa_s^{-4/3} \epsilon_1^{4/3}
\label{eq:worst_case_complexity_f_1}
\end{align}
where the fourth inequality uses the fact that $\| \nabla f(\x_k + \s_k) \| \geq \epsilon_1$ before termination.\\

Let's now consider the function decrease in terms of the second-order criticality measure. First, let $\kappa_{s,2} = \left( 3 \sigma_{max} + \frac{L_t}{2} + \theta + \frac14 \right)$. For each successful iteration $k$, we have
\begin{align}
f(\x_k) - f(\x_{k+1}) &\geq \eta_1(f(\x_k) - \phi_k(\s_k)) \nonumber \\
&\stackrel{\eqref{eq:decrease_model}}{\geq} \frac{1}{4} \eta_1 \sigma_{min} \vectornorm{\s_k}^4_2 \nonumber \\
&\stackrel{\eqref{eq:lower_stepbound_lambda}}{\geq} \frac{1}{4} \eta_1 \sigma_{min}
\kappa_{k,2}^{-2} \left( \chi_{f,2}(\x_k + \s_k) - \frac12 \epsilon_2 \right)^{2} \nonumber \\
&\geq \frac{1}{4} \eta_1 \sigma_{min} \kappa_{k,2}^{-2} \left( \epsilon_2 - \frac12 \epsilon_2 \right)^{2} \nonumber \\
&\geq \frac{1}{4^2} \eta_1 \sigma_{min} \kappa_{s,2}^{-2} \epsilon_2^2
\label{eq:worst_case_complexity_f_2}
\end{align}
where the fourth inequality uses the fact that $\chi_{f,2}(\x_k + \s_k) \geq \epsilon_2$ before termination.\\

Lastly, we consider the function decrease in terms of the third-order criticality measure. First, let $\kappa_{s,3} = \left( L_t + \frac{\sigma_{max}}{2} + \theta \right)$. For each successful iteration $k$, we have
\begin{align}
f(\x_k) - f(\x_{k+1}) &\geq \eta_1(f(\x_k) - \phi_k(\s_k)) \nonumber \\
&\stackrel{\eqref{eq:decrease_model}}{\geq} \frac{1}{4} \eta_1 \sigma_{min} \vectornorm{\s_k}^4_2 \nonumber \\
&\stackrel{\eqref{eq:lower_stepbound_lambda3}}{\geq} \frac{1}{4} \eta_1 \sigma_{min}
\kappa_{k,3}^{-1} \left( \chi_{f,3}(\x_k + \s_k) - \frac12 \epsilon_3 \right)^{4} \nonumber \\
&\geq \frac{1}{4} \eta_1 \sigma_{min} \kappa_{k,3}^{-1} \left( \epsilon_3 - \frac12 \epsilon_3 \right)^{4} \nonumber \\
&\geq \frac{1}{4^3} \eta_1 \sigma_{min} \kappa_{s,3}^{-1} \epsilon_3^4
\label{eq:worst_case_complexity_f_3}
\end{align}
where the fourth inequality uses the fact that $\chi_{f,3}(\x_k + \s_k) \geq \epsilon_3$ before termination.\\

Thus on any successful iteration until termination we can guarantee the minimal of the decreases in Eqs.~\eqref{eq:worst_case_complexity_f_1}, ~\eqref{eq:worst_case_complexity_f_2} and~\eqref{eq:worst_case_complexity_f_3}, and hence,
\begin{equation}
f(\x_0) - f(\x_{k+1}) \geq \frac{1}{8} \eta_1 \sigma_{min} \min \left(\frac{\kappa_s^{-4/3}}{\sqrt[3]{2}}, \frac{\kappa_{s,2}^{-2}}{2}, \frac{\kappa_{s,3}^{-1}}{8} \right) \min(\epsilon_1^{4/3}, \epsilon_2^2, \epsilon_3^4) | \S_k |
\end{equation}

Using that $f$ is bounded below by $f_{low}$, we  conclude
\begin{equation}
| \S_k | \leq \frac{8 \max(\sqrt[3]{2} \kappa_s^{4/3}, 2 \kappa_{s,2}^{2}, 8 \kappa_{s,3}) (f(\x_0) - f_{low})}{\eta_1 \sigma_{min}}  \max(\epsilon_1^{-4/3}, \epsilon_2^{-2}, \epsilon_3^{-4}).
\end{equation}

We can then use Lemma~\ref{lemma:total_nb_steps} to get a bound on the total number of iterations.

Finally, we recall that our goal is to show that the result of the theorem holds with probability $1 - \delta'$. We note that the proof relies on Eq.~\eqref{lemma:bound_s_norm}, \eqref{eq:lower_stepbound_lambda} and~\eqref{eq:lower_stepbound_lambda3} which are shown to hold for each iteration with probability greater than $1 - \delta$. Then the event that the concentration conditions hold for all $T$ iterations of the algorithm has probability greater than $1 - \delta T \geq 1 - \delta'$, i.e. we need $\delta' = \frac{\delta}{T}$. 

\end{proof}

As in~\cite{cartis2020concise}, we obtain a worst-complexity bound of the order $\bigO(\max(\epsilon_1^{-\frac{4}{3}}, \epsilon_2^{-2}))$, except that we do not require the exact computation of the function derivatives but instead rely on approximate sampled quantities. By using third-order derivatives, \methodname~also obtains a faster rate than the one achieved by a sampled variant of cubic regularization~\citep{kohler2017sub} (at most $\bigO(\epsilon^{-3/2})$ iterations for $\|\nabla f(\x^*) \| \leq \epsilon$ and $\bigO(\epsilon^{-3})$ to reach approximate nonnegative curvature).

We note that the worst-case complexity bound stated in Theorem~\ref{th:worst_case_complexity} holds with high probability, which is due to the sampling conditions in Eqs.~\eqref{eq:sampling_g},~\eqref{eq:sampling_b} and~\eqref{eq:sampling_t} that are shown to be satisfied with high probability in Lemma~\ref{lemma:sampling_conditions_with_replacement}. Alternatively, one could derive a bound on the expected number of steps required to reach a third-order critical point (e.g. following the analysis of~\citep{blanchet2016convergence}), or other convergence results such as almost sure convergence (as a consequence of the Borel-Cantelli lemma).

\paragraph{Lower bound}
A recent work by~\cite{arjevani2020second} has derived lower bounds for stochastic higher-order methods under higher-order smoothness conditions. Their results show that there are worst-case functions for which stochastic high-order methods can not match the rate of deterministic methods. It is important to point out that our result does not contradict their lower bound. For hard problem instances, the sample complexity required by our algorithm will be full batch. This is in fact not surprising. For instance, even SGD without variance reduction does not achieve the rate of the deterministic version in the worst-case. However,~\cite{friedlander2012hybrid} showed that an adaptive sampling strategy does allow SGD to achieve the deterministic rate under some conditions on the approximation error. We see our sampling condition as an analog to these first-order strategies for higher-order derivatives.
One aspect that we think is worth pointing out is that there are many practical problems where the sample size required will not be full batch and the approach will therefore make significant computational savings.

\paragraph{Condition~\ref{cond:approximate_min} required in Theorem.~\ref{th:worst_case_complexity}}
Instead of exactly solving the subproblem defined in Eq.~\eqref{eq:subproblem}, Condition~\ref{cond:approximate_min} requires an approximate solution. For first and second-order convergence guarantees, one can use the (first-order) accelerated solvers for non-convex functions proposed by~\cite{carmon2017convex,carmon2018accelerated}.
Although these solvers were not \emph{specifically} design to solve Eq.~\eqref{eq:subproblem}, they do fulfill the guarantees required in Condition~\ref{cond:approximate_min}. For convex problems, ~\cite{nesterov2015implementable} provides theoretical guarantees for a high-order regularized method under some convexity assumptions for the model. However, this problem is still unsolved for $p > 2$ in the non-convex case.


\section{Experimental results}
\label{sec:practical_implementation}
\vspace{-2mm}

\begin{figure*}
\label{fig:exp_results}
	\begin{center}
          \begin{tabular}{@{}c@{\hspace{5mm}}c@{\hspace{5mm}}c@{\hspace{5mm}}}
            \includegraphics[width=0.3\linewidth]{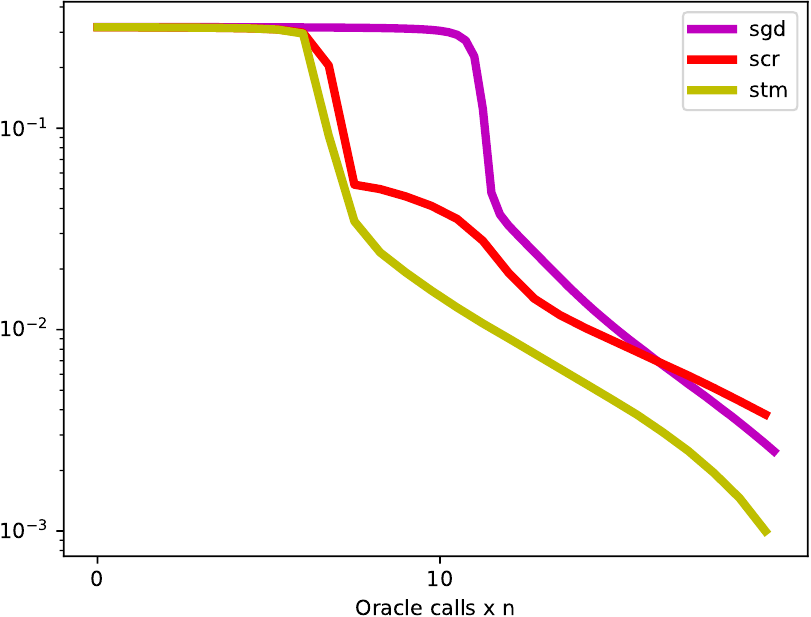} &
            \includegraphics[width=0.3\linewidth]{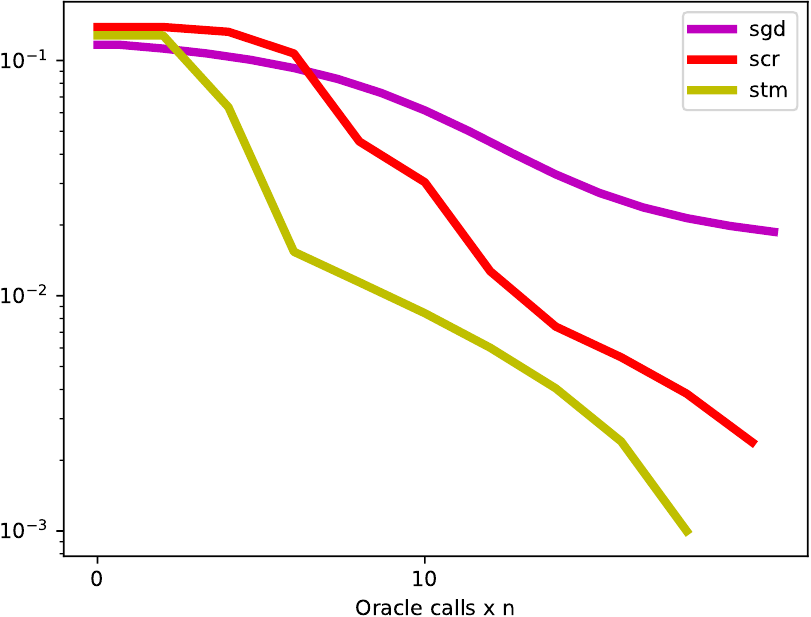} &
            \includegraphics[width=0.3\linewidth]{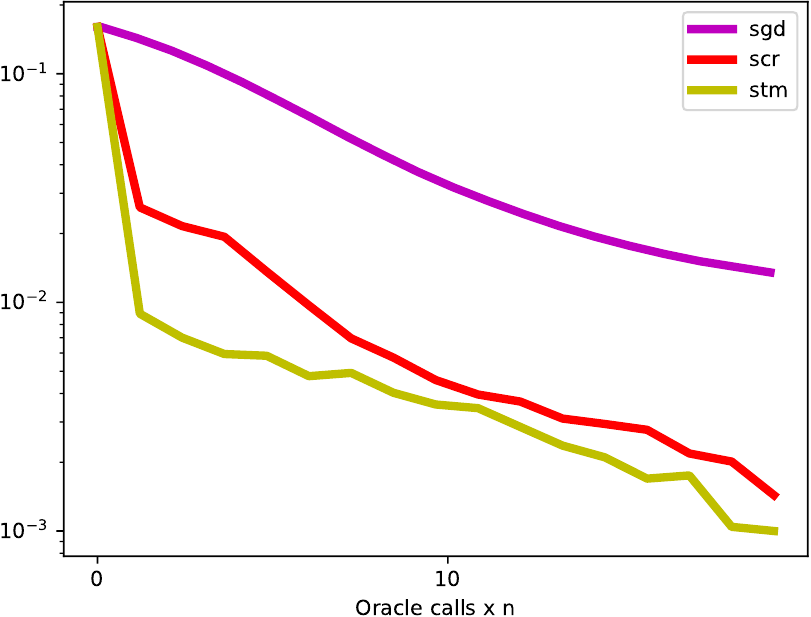}  \\
         	\scriptsize{{ A9A (n=32561, d=123)}} &
            \scriptsize{{ COVTYPE (n=581012, d=54)}} &          
            \scriptsize{{ SVMGUIDE (n=1243, d=21)}} \\
	  \end{tabular}
          \caption{\footnotesize{Log suboptimality as a function of the number of epochs. We count all oracle evaluations (including high-order derivatives) on the x-axis. Each curve is the average of 10 runs initialized from different random points.}}
          \label{fig:results}
	\end{center}
\end{figure*}

In this section, we test the performance of~\methodname~ on a non-convex logistic
regression problem similar to the ones used in~\cite{ghadimi2019generalized, zhu2020adaptive}.
Specifically, given a set of $n$ labeled datapoints $(\xi_i, y_i)_{i=1}^n$ where $\xi_i \in \R^d$ and $y_i = (0, 1)$, we consider the objective function
\begin{equation}
f(\x) = \frac12 \sum_{i=1}^n \left( \frac{1}{1 + e^{-\x^\top \xi_i}} - y_i \right)^2 + \frac{\lambda}{2} \| \x \|^2.
\end{equation}

\paragraph{Datasets}
The real-world datasets we use represent very common instances of Machine Learning problems and are part of the libsvm library~\citep{chang2011libsvm}. A summary of their main characteristic can be found in Table~\ref{table:datasets}.

\begin{table}[H]
\centering
\begin{tabular}{l|lll}
dataset & type                           & n          & d \\ \hline
 a9a                 & Classification              & $32,561$      & $123$  \\
 covtype             & Classification              & $581,012$    & $54$ \\
 svmguide3               & Classification              & $1243$ & $21$ \\
\end{tabular}
\label{table:datasets}
\caption{Overview of the real-world datasets used in our experiments.}
\end{table}

\paragraph{Practical implementation of \methodname}
We implement \methodname~as stated in Algorithm~\ref{alg:stm} using (stochastic) gradient descent steps to optimize $m_k(\s_k)$. Following~\cite{erdogdu2015convergence, kohler2017sub}, we require the sampling conditions (see Section~\ref{sec:sampling_conditions}) to hold with probability $\bigO(1-1/d)$.

We here provide additional results and briefly describe the baseline algorithms used in the experiments as well as the choice of hyper-parameters. All experiments were run on a 2.4 GHz CPU. We use autograd to compute the gradients.

\paragraph{Initialization.} All of our experiments were started from an initial weight vector $\w_0$ picked at random.

\paragraph{Choice of parameters for Stochastic Gradient Descent (SGD)}
We select the best mini-batch size in the set $\{\lceil 1\% \rceil, \lceil 5\% \rceil, \lceil 10\% \rceil \}$. We use a constant step-size as this yields faster initial convergence~\cite{hofmann2015variance},\cite{roux2012stochastic}. We pick the best step size in the set $\{ 1\mathrm{e}{-4}, \dots 1\mathrm{e}{-1}, 1 \}$. 
 
\paragraph{Choice of parameters for SCR and~\methodname.}
 The regularization parameter update is set with $\gamma_1=0.8, \gamma_2=1.2, \gamma_3=2$. The goal is to reduce the penalty rapidly as soon as convergence sets in, while keeping some regularization in the non asymptotic regime. A more sophisticated approach can be found in \cite{gould2012updating}. In our experiments we start with $\sigma_0=1, \eta_1 = 0.2, \text{ and } \eta_2=0.8$ as well as an initial sample size of $5\%$. We do not adapt the sample size for simplicity, although developing an adaptive scheme to select the sample size would be of practical interest.

\paragraph{Subsolver} We simply use gradient descent as a subsolver (both for SCR and~\methodname) and we pick the best step size in the set $\{ 1\mathrm{e}{-4}, \dots 1\mathrm{e}{-1}, 1 \}$. As mentioned previously, one could rely on more sophisticated solvers, e.g.~\cite{carmon2017convex,carmon2018accelerated}, but we found gradient descent to perform sufficiently well for our purpose.

\paragraph{Results}

We ran~\methodname~on three datasets whose details (number of datapoints $n$ and dimension $d$) are given in the caption of Fig.~\ref{fig:results}. All datasets are publicly available at~\url{https://www.csie.ntu.edu.tw/~cjlin/libsvmtools/datasets/}. We plot the objective value against the number of oracle evaluations in Fig~\ref{fig:exp_results}. We include a comparison to SGD and SCR~\citep{kohler2017sub}. The batch sizes and learning rates are tuned separately for each method in order to ensure a fair comparison, see description above. From these experiments, we see that STM provides significant speed-ups in terms of oracle calls (i.e. number of derivatives, including second- and third-order derivatives) over SGD and SCR. This confirms the theoretical results developed in Section~\ref{sec:analysis}.

\paragraph{Limitations}

Of course, the use of second and third-order derivatives makes the wall-clock time of second- and high-order methods more expensive and further work is necessary to make these methods truly competitive. In the second-order literature, we have recently seen a plethora of work that address this issue, including for instance KFAC~\citep{martens2015optimizing} or other efficient block-diagonal approximations~\citep{botev2017practical}. We envision that similar approximations could be use for third- and higher-order derivatives.



\vspace{-2mm}
\section{Conclusion}
\label{sec:conclusion}
We presented a sub-sampled third-order regularized optimization algorithm that finds an approximate third-order critical point and provides computational gains over the deterministic method by sub-sampling the derivatives.
We see our work as a more theoretical contribution at this stage that demonstrates that sub-sampled high-order methods can theoretically achieve faster rates of convergence. Our work also opens the door to numerous extensions that could yield to practical algorithms for optimizing complex non-convex functions.

For instance, prior work such as~\cite{allen2018natasha,xu2018first} has relied on using variance reduction to achieve faster rates. A variance-reduced variant of cubic regularization has also been shown in~\cite{wang2018stochastic} to reduce the per-iteration sample complexity and one would therefore except similar improvements can be made to the quartic model. One could also modify Algorithm~\ref{alg:stm} to rely on approximate function evaluations (instead of exact evaluations), as done in~\citep{blanchet2016convergence, bellavia2018adaptive}. Yet another extension would be to incorporate acceleration in Algorithm~\ref{alg:stm} as in~\cite{nesterov2008accelerating}.


Finally, one relevant application for the type of high-order algorithms we developed is training deep neural networks as in~\cite{tripuraneni2018stochastic, adolphs2019ellipsoidal}. An interesting direction for future research would therefore be to design a practical implementation of~\methodname~for training neural networks based on efficient tensor-vector products similarly to the fast Hessian-vector products proposed in~\cite{pearlmutter1994fast}.

\paragraph{Acknowledgements} The authors would like to thank Coralia Cartis for helpful discussions on an early draft of this paper, as well as for pointing out additional relevant work. We also thank Roman Vershynin for a discussion related to tensor concentration inequalities, as well as the reviewers whose feedback was helpful to improve this manuscript.

\bibliographystyle{plainnat}
\bibliography{main}


\newpage
\appendix
\part*{Appendix}

\section{Sampling conditions - sampling with replacement}
\label{sec:sampling_conditions}

In the main paper, we discuss two different sampling scenarios: i) random sampling \emph{without} replacement, and ii) sampling with replacement. Since sampling \emph{without} replacement yields a lower sample complexity, we directly use the corresponding results in the main part of the paper. In this appendix, we give the proofs for the case of sampling with replacement. Specifically, we prove that one can use random sampling with replacement in order to satisfy the three sampling conditions presented in Eqs.~\eqref{eq:sampling_g},~\eqref{eq:sampling_b} and~\eqref{eq:sampling_t}.

First, we introduce some known results and then derive a concentration bound for a sum of i.i.d. tensors usually a similar proof technique as in the previous subsection.

\subsection{Existing results}

The following results are well-known and can for instance be found in~\cite{tropp2012user, tropp2015introduction}.

\begin{theorem}[Matrix Hoeffding]
\label{thm:intro-hoeffding}
Consider a finite sequence $\{ \Xm_k \}$ of independent, random, self-adjoint matrices with dimension $d$, and let $\{ \Am_k \}$ be a sequence of fixed self-adjoint matrices.  Assume that each random matrix satisfies
$$
\E \Xm_k = \textbf{0} \quad\text{and}\quad \Xm_k^2 \preccurlyeq \Am_k^2
\quad\text{almost surely}.
$$
Then, for all $t \geq 0$,
$$
P \left( \lambda_{\max}\left( \sum\nolimits_k \Xm_k \right) \geq t \leq d \cdot e^{-t^2 / 8\sigma^2} \right)
	\quad\text{where}\quad
	\sigma^2 := \| \sum\nolimits_k \Am_k^2 \|.
$$
\end{theorem}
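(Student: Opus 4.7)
The plan is to establish Theorem~\ref{thm:intro-hoeffding} through the \emph{matrix Laplace transform method} due to Ahlswede--Winter and refined by Tropp, which is the standard route for self-adjoint matrix concentration inequalities. The starting point is the following tail bound obtained from matrix Markov: for any $\theta > 0$ and $S := \sum_k \Xm_k$,
\begin{equation*}
P\bigl(\lambda_{\max}(S) \geq t\bigr) \;\leq\; e^{-\theta t}\,\E\,\tr\exp(\theta S).
\end{equation*}
This reduces the problem to controlling the trace exponential of a sum.

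The next step, which is the crux of the argument, is to split the matrix cumulant generating function over the independent summands. Here one invokes Lieb's concavity theorem (or equivalently Tropp's master inequality) to obtain
\begin{equation*}
\E\,\tr\exp\Bigl(\sum\nolimits_k \theta \Xm_k\Bigr) \;\leq\; \tr\exp\Bigl(\sum\nolimits_k \log\E\,e^{\theta \Xm_k}\Bigr).
\end{equation*}
This linearization in the exponent is the property that makes the scalar Chernoff recipe transfer to the matrix setting.

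Now I would derive a matrix Hoeffding-type mgf bound: assuming $\E \Xm_k = 0$ and $\Xm_k^2 \preccurlyeq \Am_k^2$ almost surely, prove that
\begin{equation*}
\log\E\,e^{\theta \Xm_k} \;\preccurlyeq\; \tfrac{\theta^2}{2}\,\Am_k^2
\end{equation*}
in the semidefinite order. The scalar analog is Hoeffding's lemma; the matrix version follows by comparing $e^{\theta x}$ to the quadratic envelope $1 + \theta x + \tfrac{\theta^2}{2} x^2$ combined with the operator monotonicity of the logarithm, or by a direct Taylor expansion argument using $\E \Xm_k = 0$ and the semidefinite bound on $\Xm_k^2$. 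Plugging this into the trace exponential, using monotonicity of the trace exponential and the definition of the spectral norm, yields
\begin{equation*}
P\bigl(\lambda_{\max}(S) \geq t\bigr) \;\leq\; d \cdot \exp\bigl(-\theta t + \tfrac{\theta^2}{2}\sigma^2\bigr).
\end{equation*}
Optimizing $\theta = t/\sigma^2$ gives the subgaussian decay $d\,e^{-t^2/(2\sigma^2)}$; the slightly weaker $d\,e^{-t^2/(8\sigma^2)}$ in the statement accommodates the Hoeffding-style scaling (e.g.\ if one invokes the $(U_k-L_k)^2/4$ form of the mgf bound).

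The main obstacle is the mgf step: the passage from $\Xm_k^2 \preccurlyeq \Am_k^2$ to a semidefinite bound on $\log\E\,e^{\theta \Xm_k}$ is genuinely non-commutative and requires either Lieb's theorem or a careful symmetrization-plus-Taylor-series argument. Everything else (Markov, optimization of $\theta$, extraction of the spectral norm $\sigma$) is routine once the cgf bound is in place.
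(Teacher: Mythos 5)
This theorem is stated in the paper as a known result imported verbatim from Tropp's work (\cite{tropp2012user, tropp2015introduction}); the paper supplies no proof of its own. Your outline --- matrix Markov/Laplace transform, Lieb's concavity to subadditivize the matrix cumulant generating functions, a Hoeffding-type semidefinite mgf bound, and optimization over $\theta$ --- is exactly the derivation used in those cited references, so it matches the intended proof in both approach and substance.
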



\begin{lemma}[Hoeffding's lemma]
\label{lemma:Hoeffding}
Let $Z$ be any real-valued bounded random variable such that $a \leq Z \leq b$. Then, for all $s \in \R$,
\begin{equation}
\E \left[ e^{s (Z - \E(Z))} \right] \leq \exp \left( \frac{s^2 (b - a)^2}{8} \right).
\end{equation}
\end{lemma}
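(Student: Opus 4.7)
The plan is to give the standard symmetrization/convexity proof of Hoeffding's lemma. First I would reduce to the centered case: by replacing $Z$ with $Z - \E Z$, I may assume $\E Z = 0$, in which case the interval $[a,b]$ contains the origin, so $a \le 0 \le b$ (the case $a = b$ is trivial). The goal then becomes showing $\E[e^{sZ}] \le \exp(s^2(b-a)^2/8)$ for every real $s$.

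Next I would exploit convexity of $x \mapsto e^{sx}$. Any $z \in [a,b]$ can be written as a convex combination $z = \lambda a + (1-\lambda) b$ with $\lambda = (b-z)/(b-a)$, so
\begin{equation*}
e^{sz} \;\le\; \frac{b-z}{b-a}\, e^{sa} + \frac{z-a}{b-a}\, e^{sb}.
\end{equation*}
Taking expectations and using $\E Z = 0$ gives
\begin{equation*}
\E[e^{sZ}] \;\le\; \frac{b}{b-a}\, e^{sa} - \frac{a}{b-a}\, e^{sb}.
\end{equation*}

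The next step would be to rewrite the right-hand side in a form amenable to Taylor expansion. Let $p := -a/(b-a) \in [0,1]$ and $u := s(b-a)$. Then the bound becomes $\E[e^{sZ}] \le e^{\psi(u)}$ where
\begin{equation*}
\psi(u) \;=\; -pu + \log\!\bigl((1-p) + p\, e^{u}\bigr).
\end{equation*}
A direct computation gives $\psi(0)=0$, $\psi'(0)=0$, and $\psi''(u) = q(u)(1-q(u))$ where $q(u) := p e^u/((1-p)+pe^u) \in [0,1]$. Since $t(1-t) \le 1/4$ for all $t \in [0,1]$, we get $\psi''(u) \le 1/4$ uniformly in $u$.

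Finally, I would apply Taylor's theorem with integral (or Lagrange) remainder to conclude $\psi(u) \le u^2/8$, which gives $\E[e^{sZ}] \le \exp(u^2/8) = \exp(s^2(b-a)^2/8)$, as required. The one routine but slightly delicate point is the bound $\psi''(u) \le 1/4$ via the Bernoulli-variance identity; the rest is bookkeeping, so no serious obstacle is expected.
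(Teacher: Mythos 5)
Your argument is the classical proof of Hoeffding's lemma and it is correct and complete: centering reduces to $\E Z = 0$ with $a \le 0 \le b$, convexity of $x \mapsto e^{sx}$ on $[a,b]$ gives $\E[e^{sZ}] \le \frac{b}{b-a}e^{sa} - \frac{a}{b-a}e^{sb}$, and the substitution $p = -a/(b-a)$, $u = s(b-a)$ turns the right-hand side into $e^{\psi(u)}$ with $\psi(u) = -pu + \log\bigl((1-p) + pe^u\bigr)$, whose second derivative is $q(u)(1-q(u)) \le 1/4$, so Taylor's theorem yields $\psi(u) \le u^2/8$. There is, however, nothing in the paper to compare this against: the lemma appears in the appendix under ``Existing results,'' is explicitly described as well known, and is cited to the literature without proof. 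Your write-up would therefore serve as a self-contained replacement for that citation rather than an alternative to an in-paper argument. Two routine points you should make explicit if you finalize it: the degenerate cases $a=b$ (trivial) and $p \in \{0,1\}$ (where $\psi'' \equiv 0$), and the observation that centering preserves the interval width $b-a$, so the bound is unchanged by the reduction. One further remark on how the lemma is used downstream: in Lemma~\ref{lem:entry_concentration} the paper applies this bound to each $\Y_i(\u_1,\ldots,\u_k)$ with a common $\sigma = b-a$, so the uniformity of your bound in $s$ and its dependence only on the width $b-a$ (not on the location of the interval) is exactly what is needed there.
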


\subsection{Concentration bound for sum of i.i.d. tensors}

In the following, we provide a concentration bound for sampling with replacement for tensors. This result is based on the proof technique introduced in~\cite{tomioka2014spectral} which we adapt for sums of independent random variables.

\paragraph{Proof idea} In the following, we first provide a concentration bound for each entry in the tensor $\X$ (Lemma~\ref{lem:entry_concentration}). We then use Lemma~\ref{lem:entry_concentration} to obtain a concentration bound for the tensor $\X$ by using a covering argument similar to~\cite{tomioka2014spectral}.\\




\begin{lemma}
\label{lem:entry_concentration}
Let $\X$ be a sum of $n$ i.i.d. tensors $\Y_i \in \R^{d_1 \times \dots \times d_k}$. Let $\u_1, \dots \u_k$ be such that $\| \u_i \| = 1$ and assume that for each tensor $i$, $a \leq \Y_i(\u_1,\ldots,\u_k) \leq b$. Let $\sigma := (b-a)$, then we have
\begin{align*}
P\left(|\X(\u_1,\ldots,\u_k) - \E[\X(\u_1,\ldots,\u_k)] | \geq t \right) \leq 2 \exp \left(-\frac{2t^2}{n \sigma^2}\right).
\end{align*}
\end{lemma}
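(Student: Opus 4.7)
The plan is to reduce this to the classical scalar Hoeffding inequality. The crucial observation is that once we fix unit vectors $\u_1, \dots, \u_k$, the multilinear contraction $\Y_i(\u_1, \dots, \u_k)$ is just a real-valued random variable, and $\X(\u_1, \dots, \u_k) = \sum_{i=1}^n \Y_i(\u_1, \dots, \u_k)$ is a sum of $n$ independent bounded scalars (independence follows from the $\Y_i$ being i.i.d.\ and the contraction being a measurable function of each tensor separately). Define $Z_i := \Y_i(\u_1, \dots, \u_k)$ so that $a \leq Z_i \leq b$ and $\sum_i Z_i = \X(\u_1, \dots, \u_k)$.

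The next step is a standard Chernoff/Cramér--Chernoff argument. For any $s > 0$, Markov's inequality applied to the exponential gives
\begin{equation*}
P\left(\X(\u_1,\ldots,\u_k) - \E[\X(\u_1,\ldots,\u_k)] \geq t \right) \leq e^{-st}\, \E\!\left[e^{s \sum_i (Z_i - \E Z_i)}\right],
\end{equation*}
and by independence the expectation factorizes as $\prod_i \E[e^{s(Z_i - \E Z_i)}]$. Applying Hoeffding's lemma (Lemma~\ref{lemma:Hoeffding}) to each factor gives $\E[e^{s(Z_i - \E Z_i)}] \leq \exp(s^2 \sigma^2 / 8)$, so the product is bounded by $\exp(n s^2 \sigma^2 / 8)$. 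This yields
\begin{equation*}
P\left(\X(\u_1,\ldots,\u_k) - \E[\X(\u_1,\ldots,\u_k)] \geq t \right) \leq \exp\!\left(-st + \tfrac{n s^2 \sigma^2}{8}\right).
\end{equation*}

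Optimizing over $s > 0$ (the minimizer is $s = 4t/(n\sigma^2)$) produces the one-sided bound $\exp(-2 t^2/(n\sigma^2))$. The matching lower-tail bound follows by applying the same argument to $-Z_i$, which is bounded in $[-b, -a]$ and thus has the same range. A union bound over the two tails gives the claimed factor of $2$ and completes the proof.

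There is essentially no real obstacle here: the result is just scalar Hoeffding in disguise, since the tensor structure only enters through the multilinear functional $\Y_i \mapsto \Y_i(\u_1, \dots, \u_k)$, which the hypothesis bounds into $[a, b]$. The work of dealing with the tensor structure proper is done separately in Theorem~\ref{th:tensor_hoefdding} via the $\epsilon$-net/covering argument, which uses this lemma as a pointwise building block.
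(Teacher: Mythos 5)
Your proof is correct and follows essentially the same route as the paper's: fix the unit vectors to reduce to a scalar sum, apply the Chernoff/Markov argument, factor the moment generating function by independence, bound each factor via Hoeffding's lemma, optimize over $s$, and combine the two tails with a union bound. No discrepancies to report.
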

\begin{proof}

By Markov's inequality and Hoeffding's lemma, we have
\begin{align*}
P&\left(\X(\u_1,\ldots,\u_k) - \E[\X(\u_1,\ldots,\u_k)] \geq t\right) \\
&= P\left(e^{s(\X(\u_1,\ldots,\u_k)- \E[\X(\u_1,\ldots,\u_k)])}\geq e^{st} \right) \\
&\leq e^{-st} \E\left[e^{(s(\X(\u_1,\ldots,\u_k)- \E[\X(\u_1,\ldots,\u_k)]}) \right] \\
&= e^{-st} \E\left[e^{(s(\sum_i \Y_i(\u_1,\ldots,\u_k)- \E[\sum_i \Y_i(\u_1,\ldots,\u_k)]}) \right] \\
&\stackrel{(i)}{=} e^{-st} \prod_{i=1}^n \E\left[e^{(s(\Y_i(\u_1,\ldots,\u_k)- \E[\Y_i(\u_1,\ldots,\u_k)]}) \right] \\
&\stackrel{(ii)}{\leq} \exp\left(-st+\frac{n \sigma^2 s^2}{8}\right),
\end{align*}
where $(i)$ follows by independence of the $\Y_i$'s and $(ii)$ follows from Lemma~\ref{lemma:Hoeffding}.

After minimizing over $s$, we obtain
\begin{equation*}
P\left(\X(\u_1,\ldots,\u_k) - \E[\X(\u_1,\ldots,\u_k)] \geq t\right) \leq e^{-2t^2/(n \sigma^2)}.
\end{equation*}

Similarly one can show that $P(\X(\u_1,\ldots,\u_k) \leq -t)\leq e^{-2t^2/(n \sigma^2)}$. We then complete the proof by taking the union of both cases.
\end{proof}

\begin{theorem}[Lemma 5.4 restated, Tensor Hoeffding Inequality]
\label{th:tensor_hoefdding_app}
Let $\X$ be a sum of $n$ i.i.d. tensors $\Y_i \in \R^{d_1 \times \dots \times d_k}$. Let $\u_1, \dots \u_k$ be such that $\| \u_i \| = 1$ and assume that for each tensor $i$, $a \leq \Y_i(\u_1,\ldots,\u_k) \leq b$. Let $\sigma := (b-a)$, then we have
\begin{equation*}
P(\norm{\X - \E \X} \geq t) \leq k_0^{(\sum_{i=1}^{k} d_i)} \cdot 2 \exp\left(-\frac{t^2}{2n \sigma^2}\right),
\end{equation*}
where $k_0 = \left(\frac{2k}{\log(3/2)}\right)$.
\end{theorem}
\begin{proof}
We use the same covering number argument as in~\cite{tomioka2014spectral}. Let $C_1,\ldots,C_k$ be $\epsilon$-covers of $S^{d_1-1}, \ldots, S^{d_k-1}$. Then since $S^{d_1-1} \times \cdots \times S^{d_k-1}$ is compact, there exists a maximizer $(\u_1^\ast,\ldots,\u_k^\ast)$ of \eqref{eq:spectralnorm}. Using the $\epsilon$-covers, we have
\begin{equation*}
\norm{\X} = \X(\bar{\u}_1+\vdelta_1,\ldots,\bar{\u}_k+\vdelta_k),
\end{equation*}
where $\bar{\u}_i \in C_i$ and $\|\vdelta_i\| \leq \epsilon$ for $i=1,\ldots,k$.
 
Now
\begin{equation*}
\norm{\X} \leq \X(\bar{\u}_1,\ldots,\bar{\u}_k) + \left(\epsilon k + \epsilon^2\binom{k}{2}+\cdots \epsilon^k\binom{k}{k}\right) \norm{\X}.
\end{equation*}

Take $\epsilon = \frac{\log(3/2)}{k}$ then the sum inside the parenthesis
 can be bounded as follows:
\begin{equation*}
\epsilon k + \epsilon^2\binom{k}{2} + \cdots \epsilon^k\binom{k}{k} \leq
\epsilon k + \frac{(\epsilon k)^2}{2!} + \cdots \frac{(\epsilon k)^k}{k!} \leq e^{\epsilon k} - 1 = \frac12.
\end{equation*}

Thus we have 
\begin{equation*}
\norm{\X} \leq 2 \max_{\bar{\u}_1\in C_1,\ldots,\bar{\u}_k\in C_k} \X(\bar{\u}_1,\ldots,\bar{\u}_k).
\end{equation*}

So far, all the steps have been identical to the proof in~\cite{tomioka2014spectral}. We conclude the proof with one last step that is a simple adaptation of the proof in~\cite{tomioka2014spectral}, combined with the result of Lemma~\ref{lem:entry_concentration}.

Since the $\epsilon$-covering number $|C_k|$ can be bounded by $\epsilon/2$-packing number, which can be bounded by $(2/\epsilon)^{d_k}$, using the union bound. Therefore, by Lemma \ref{lem:entry_concentration}
\begin{align*}
P(\norm{\X - \E \X} \geq t)&\leq \sum_{\bar{\u}_1 \in C_1, \ldots, \bar{\u}_k \in C_k} P\left(\X(\bar{\u}_1,\ldots,\bar{\u}_k) - \E[\X(\bar{\u}_1,\ldots,\bar{\u}_k]) \geq \frac{t}{2}\right) \\
&\leq k_0^{\sum_{i=1}^{k} d_i} \cdot 2 \exp\left(-\frac{t^2}{2n \sigma^2}\right).
\end{align*}

\end{proof}

\section{Auxiliary Lemmas}


\subsection{Bound regularization parameter}

The following lemma provides an upper bound on the regularization parameter $\sigma_k$. The proof is conceptually similar to Lemma 3.3 in~\cite{cartis2011adaptive2}.

\begin{lemma}
Let Assumption~\ref{a:continuity} hold and assume Condition~\ref{cond:approximate_min} holds. Also assume that
\begin{equation}
\sigma_k > \hat{\sigma}_{sup}:= \max \left(\frac{4\xi}{(1-\eta_2)} ,\frac{\xi \left(4L_t + 2 + 8 \theta \right)}{\left(1-\eta_2\right)\epsilon - 8\xi}
\right),
\label{eq:sigma_k_successful}
\end{equation}
where $\xi:=\left(\epsilon\kappa_g+\frac{\epsilon^{2/3}\kappa_b}{2}+\frac{\epsilon^{1/3}\kappa_t+L_t}{6}\right)$. Then iteration $k$ is very successful and consequently $\sigma_k \leq \gamma_3 \hat{\sigma}_{sup} := \sigma_{max}$ for all $k$.

\label{lemma:successful_sigma}
\end{lemma}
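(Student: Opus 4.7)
The proof follows the classical ARC-style argument (cf.~Lemma~3.3 of~\cite{cartis2011adaptive2}): the plan is to exhibit a threshold $\hat{\sigma}_{sup}$ such that whenever $\sigma_k > \hat{\sigma}_{sup}$, the iteration is guaranteed to be very successful, i.e.\ $\rho_k > \eta_2$. The update rule in Eq.~\eqref{eq:sigma_update} then forces $\sigma_{k+1} \leq \sigma_k$ whenever $\sigma_k$ exceeds the threshold, and a short induction yields $\sigma_k \leq \gamma_3 \hat{\sigma}_{sup} = \sigma_{max}$ for every $k$; the factor $\gamma_3$ absorbs one possible unsuccessful expansion just before $\sigma_k$ crosses $\hat{\sigma}_{sup}$.

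The heart of the argument is to control $1-\rho_k = (\phi_k(\s_k) - f(\x_k+\s_k))/(f(\x_k) - \phi_k(\s_k))$. For the denominator I would apply Lemma~\ref{lemma:model_decrease} directly to obtain $f(\x_k) - \phi_k(\s_k) > (\sigma_k/4)\|\s_k\|^4$. For the numerator I would decompose $f(\x_k+\s_k) - \phi_k(\s_k)$ into (i) the Taylor remainder of $f$ about $\x_k$ truncated at third order, bounded using an integral mean-value form together with the $L_t$-Lipschitz continuity of $\nabla^3 f$ (Assumption~\ref{a:continuity}), and (ii) three inexactness contributions arising from replacing $\nabla f(\x_k), \nabla^2 f(\x_k), \nabla^3 f(\x_k)$ by $\g_k, \Bm_k, \Tm_k$, each bounded by the corresponding line of Condition~\ref{cond:sampling}. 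Combining these four contributions produces a bound of the form
\begin{equation*}
|f(\x_k+\s_k) - \phi_k(\s_k)| \leq \kappa_g\,\epsilon\,\|\s_k\| + \tfrac{\kappa_b\,\epsilon^{2/3}}{2}\|\s_k\|^2 + \tfrac{\kappa_t\,\epsilon^{1/3}}{6}\|\s_k\|^3 + \tfrac{L_t}{6}\|\s_k\|^4,
\end{equation*}
whose coefficients are precisely those packed into the constant $\xi$. Requiring the ratio $1-\rho_k$ to lie below $1-\eta_2$ then reduces to an inequality in $\sigma_k$ and $\|\s_k\|$.

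To turn this into a threshold depending only on $\sigma_k$, I would invoke Lemma~\ref{lemma:bound_s_norm}: so long as $\chi_{f,1}(\x_k+\s_k) > \epsilon$ (otherwise the algorithm has already located a first-order critical point), this gives $\|\s_k\|^3 \geq \epsilon/(2\kappa_k)$ with $\kappa_k = \sigma_k + L_t/2 + \theta + 1/4$. Using this to upper-bound each of the lower-order powers $\|\s_k\|, \|\s_k\|^2, \|\s_k\|^3$ in terms of $\|\s_k\|^4$, the sufficient condition for $\rho_k \geq \eta_2$ collapses to a linear inequality of the form $\sigma_k\bigl[(1-\eta_2)\epsilon - 8\xi\bigr] \geq \xi(4L_t + 2 + 8\theta)$, which delivers the second entry in $\hat{\sigma}_{sup}$; the first entry $4\xi/(1-\eta_2)$ is exactly the condition ensuring that the bracket $(1-\eta_2)\epsilon - 8\xi$ is positive so that the final division is legitimate. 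The main obstacle will be this substitution step: because $\|\s_k\|^3 \geq \epsilon/(2\kappa_k)$ depends on $\sigma_k$ through $\kappa_k$, the inequality one obtains is implicit in $\sigma_k$ and must be rearranged carefully to isolate a clean threshold while tracking the non-linear $\kappa_k^{2/3}$ and $\kappa_k^{1/3}$ contributions produced by the sampling terms at powers $\|\s_k\|^2$ and $\|\s_k\|^3$. Once that algebraic step is done, the concluding induction on Eq.~\eqref{eq:sigma_update} to propagate $\sigma_k \leq \sigma_{max}$ across all iterations is entirely routine.
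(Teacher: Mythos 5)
Your overall architecture matches the paper's: bound $f(\x_k+\s_k)-\phi_k(\s_k)$ by the Taylor remainder plus the three sampling errors (giving exactly the coefficients packed into $\xi$), compare against the model decrease $f(\x_k)-\phi_k(\s_k) > \tfrac{\sigma_k}{4}\|\s_k\|^4$ from Lemma~\ref{lemma:model_decrease}, and eliminate $\|\s_k\|$ via Lemma~\ref{lemma:bound_s_norm} together with $\|\nabla f(\x_k+\s_k)\|\geq\epsilon$ before termination. However, the final algebraic step as you describe it has a genuine gap. First, your reading of the first entry of the max is wrong: $4\xi/(1-\eta_2)$ is not ``the condition ensuring $(1-\eta_2)\epsilon-8\xi>0$'' --- that positivity is a condition on $\epsilon$ relative to $\xi$ (implicitly assumed), not a threshold on $\sigma_k$, and the two are not even comparable quantities. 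In the paper the first entry arises from the case $\|\s_k\|\geq 1$, where all four powers in the numerator bound are dominated by $\|\s_k\|^4$ and the comparison with $(1-\eta_2)\tfrac{\sigma_k}{4}\|\s_k\|^4$ immediately gives $\sigma_k>4\xi/(1-\eta_2)$.

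Second, your plan to convert each of $\|\s_k\|,\|\s_k\|^2,\|\s_k\|^3$ into a multiple of $\|\s_k\|^4$ using $\|\s_k\|^3\geq\epsilon/(2\kappa_k)$ does not collapse to the clean linear inequality you state: as you yourself note, it produces $\kappa_k^{1/3}$ and $\kappa_k^{2/3}$ factors, and since $\kappa_k$ is affine in $\sigma_k$ the resulting condition is not linear in $\sigma_k$ and does not rearrange to the stated threshold $\xi(4L_t+2+8\theta)/\bigl((1-\eta_2)\epsilon-8\xi\bigr)$. The missing device is the coarser bound $f(\x_k+\s_k)-\phi_k(\s_k)\leq\xi\max(\|\s_k\|,\|\s_k\|^4)$ followed by a case split on $\|\s_k\|$: when $\|\s_k\|<1$ every power is dominated by $\|\s_k\|$ alone, so very-successfulness reduces to $\sigma_k>4\xi/\bigl((1-\eta_2)\|\s_k\|^3\bigr)$, and only then is the lower bound $\|\s_k\|^3\geq\epsilon/(2\kappa_k)$ substituted --- producing a condition affine in $\sigma_k$ on both sides, which isolates exactly the second entry of $\hat{\sigma}_{sup}$. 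Without this case split (or an equivalent trick) your route does not recover the lemma's threshold.
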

\begin{proof}

First, note that
\begin{equation}
\rho_k > \eta_2 \Longleftrightarrow r_k := f(\x_k + \s_k) - f(\x_k) - \eta_2 (\phi_k(\s_k) - f(\x_k)) < 0.
\end{equation}

We rewrite $r_k$ as
\begin{equation}
r_k = f(\x_k + \s_k) - \phi_k(\s_k) + (1 - \eta_2) (\phi_k(\s_k) - f(\x_k)).
\end{equation}

From the mean value theorem,
\begin{equation}
f(\x_k + \s_k) = f(\x_k) + \nabla f(\x_k)^{\top} \s_k+ \frac{1}{2} \s_k^\intercal  \nabla^2 f(\x_k) \s_k + \frac{1}{6} \nabla^3 f(\x_k+\alpha\s_k)[\s_k]^3
\end{equation}

for some $\alpha \in (0,1)$. Therefore we can bound the first term in $r_k$ as

\begin{equation}\label{eq:kappa_max_part1}
    \begin{aligned}
        f(\x_k + \s_k) - \phi_k(\s_k) &= \left(\nabla f(\x_k) - \g_k\right)^\intercal \s_k+\frac{1}{2} \s_k^\intercal \left(\nabla^2 f(\x_k)-B_k\right) \s_k \\
        & \qquad +\frac{1}{6} \left(\nabla^3 f(\x_k+\alpha\s_k)-\Tm_k\right)[\s_k]^3 \\
        =&\left(\nabla f(\x_k) - \g_k\right)^\intercal \s_k+\frac{1}{2} \s_k^\intercal \left(\nabla^2 f(\x_k)-B_k\right) \s_k \\& +\frac{1}{6}\left(\nabla^3 f(\x_k)-\Tm_k\right)[\s_k]^3+\frac{1}{6}\left(\nabla^3 f(\x_k+\alpha\s_k)-\nabla^3 f(\x_k)\right)[\s_k]^3\\
        \leq& \kappa_g\epsilon\|\s_k\|+\frac{1}{2}\kappa_b\epsilon^{2/3}\|\s_k\|^2+\frac{1}{6}\left(\kappa_t\epsilon^{1/3}\right)\|\s_k\|^3 +\frac{L_t}{6}\|\s_k\|^4\\
        \leq& \underbrace{\left(\epsilon\kappa_g+\frac{\epsilon^{2/3}\kappa_b}{2}+\frac{\epsilon^{1/3}\kappa_t+L_t}{6}\right)}_{:=\xi}\max(\| \s_k \|, \| \s_k \|^4)
    \end{aligned}
\end{equation}


Given that Condition \ref{cond:approximate_min} holds per assumption, we can combine Eq.~\eqref{eq:decrease_model} from Lemma~\ref{lemma:model_decrease} with the above Eq.~\eqref{eq:kappa_max_part1} to derive the upper bound $\sigma_{\sup}$ as follows.\\

\textit{Case I:} If $\|\s_k\|\geq 1$ we have
\begin{equation}
r_k < 0 \iff \sigma_k>\frac{4\xi}{(1-\eta_2)}.
\end{equation}

\textit{Case II:} If $\|\s_k\|< 1$ we have 
\begin{equation}
r_k < 0 \iff \sigma_k>\frac{4\xi}{(1-\eta_2)\|s_k\|^3}.
\label{eq:r_k_case2}
\end{equation}

We need to further simplify the RHS in the equation above that contains the term $\|s_k\|^3$. To do so, we first use Lemma~\ref{lemma:bound_s_norm} to upper bound the right hand side as
\begin{equation}
    \frac{4\xi}{(1-\eta_2)\|s_k\|^3}\leq \frac{4 \xi \kappa_k}{(1-\eta_2)(\|\nabla f(\x_k+\s_k)\| - \frac12 \epsilon)} \leq \frac{2 \cdot 4\xi \left( \sigma_k + \frac{L_t}{2} + \theta + \frac14 \right)}{(1-\eta_2)\epsilon},
\label{eq:case2_upper_bound_sigma_k}
\end{equation}
where the last inequality uses the fact that $\| \nabla f(\x_k + \s_k) \| \geq \epsilon$ before termination.\\

If $\sigma_k$ is greater than the upper bound in Eq.~\eqref{eq:case2_upper_bound_sigma_k}, it is also greater than the RHS in Eq.~\eqref{eq:r_k_case2}.

Consequently $\rho_k > \eta_2$ as soon as
\begin{equation}
\sigma_k > \frac{\xi \left(4L_t + 2 + 8 \theta \right)}{\left(1-\eta_2\right)\epsilon - 8\xi}
\end{equation}

As a result, we conclude that if $\sigma_0 < \hat{\sigma}_{sup}$, then $\sigma_k \leq \gamma_3 \hat{\sigma}_{sup}$ for all $k$, where

\begin{equation}
\hat{\sigma}_{sup}:= \max \left(\frac{4\xi}{(1-\eta_2)} ,\frac{\xi \left(4L_t + 2 + 8 \theta \right)}{\left(1-\eta_2\right)\epsilon - 8\xi}
\right)
\end{equation}

\end{proof}


\end{document}